\numberwithin{equation}{section} \setlength{\oddsidemargin}{.0001in}
\newtheorem{thm}{Theorem}[section]
\newtheorem{defn}[thm]{Definition}
\newtheorem{cor}[thm]{Corollary}
\newtheorem{lemma}[thm]{Lemma}
\newtheorem{rmrk}[thm]{Remark}
\newcommand{\e}{\varepsilon}
\newcommand{\R}{\mathbb{R}}
\newcommand{\N}{\mathbb{N}}
\newcommand{\J}{\mathbb{J}}
\newtheorem{prop}[thm]{Proposition}
\newcommand{\abs}[1]{\left\vert{#1}\right\vert}
\newcommand{\C}{\mathbb{C}}
\newcommand{\ba}{\begin{array}}
\newcommand{\ea}{\end{array}}
\newcommand{\bthm}{\begin{thm}}
\newcommand{\ethm}{\end{thm}}
\newcommand{\bstp}{\begin{stp}}
\newcommand{\estp}{\end{stp}}
\newcommand{\blemma}{\begin{lemma}}
\newcommand{\elemma}{\end{lemma}}
\newcommand{\bprop}{\begin{prop}}
\newcommand{\eprop}{\end{prop}}
\newcommand{\bpf}{\begin{pf}}
\newcommand{\epf}{\end{pf}}
\newcommand{\bdefn}{\begin{defn}}
\newcommand{\edefn}{\end{defn}}
\newcommand{\brk}{\begin{rmrk}}
\newcommand{\erk}{\end{rmrk}}
\newcommand{\bcrl}{\begin{crl}}
\newcommand{\ecrl}{\end{crl}}
\newcommand{\norm}[1]{\left\|#1\right\|}
\newcommand{\beqn}{\begin{equation}}
\newcommand{\eeqn}{\end{equation}}
\renewcommand{\leq}{\leqslant}
\renewcommand{\geq}{\geqslant}
\newcommand{\lm}{\lambda^2}
\newcommand{\mS}{\mathcal{S}}
\newcommand{\mG}{\mathcal{G}}
\newcommand{\mH}{\mathcal{H}}
\newcommand{\mC}{\mathcal{C}}
\newcommand{\mP}{\mathcal{P}}
\newcommand{\beq}{\begin{equation}}
\newcommand{\eeq}{\end{equation}}
\newcommand{\bea}{\begin{eqnarray}}
\newcommand{\tg}{\tilde{\gamma}}
\newcommand{\pmi}{{\bf p}_-}
\newcommand{\ppl}{{\bf p}_+}
\newcommand{\pz}{{\bf p}_0}
\newcommand{\ppm}{{\mathbf{p}_\pm}}
\newcommand{\gb}{\gamma_{\beta}}
\newcommand{\eea}{\end{eqnarray}}
\newcommand{\ob}{\omega_{\beta}}
\newcommand{\YY}{H^1(\mathbb{R}; \mathbb{R}^2)}
\title{A Degenerate Isoperimetric Problem and Traveling Waves to a Bi-stable Hamiltonian System}
\begin{document}
\renewcommand\Authfont{\small}
\renewcommand\Affilfont{\itshape\footnotesize}

\author[1]{Stan Alama\footnote{alama@mcmaster.ca}}
\author[2]{Lia Bronsard\footnote{bronsard@mcmaster.ca}}
\author[3]{Andres Contreras\footnote{acontre@nmsu.edu}}
\author[4]{Jiri Dadok\footnote{dadok@indiana.edu}}
\author[5]{Peter Sternberg\footnote{sternber@indiana.edu}}
\affil[1,2]{Department of Mathematics and Statistics, McMaster University, Hamilton, ON L8S 4K1, Canada}
\affil[3]{Department of Mathematical Sciences, New Mexico State University, Las Cruces, NM 88003}
\affil[4,5]{Department of Mathematics, Indiana University, Bloomington, IN 47405}

\maketitle

\noindent {\bf Abstract:} We analyze a non-standard isoperimetric problem in the plane associated with a metric
having degenerate conformal factor at two points. Under certain assumptions on the conformal factor, we establish
the existence of curves of least length under a constraint associated with enclosed Euclidean area. As a motivation for
and application of this isoperimetric problem, we identify these isoperimetric curves, appropriately parametrized, as traveling
wave solutions to a bi-stable Hamiltonian system of PDE's. We also determine the existence of a maximal propagation speed
 for these traveling waves through an explicit upper bound depending on the conformal factor.

\noindent

%\vskip 1cm \noindent {\bf Key words}

\section{Introduction}

We examine here a non-standard type of isoperimetric problem. Given a continuous function $F:\R^2\to [0,\infty)$ vanishing at two points, $\ppl$ and $\pmi$ in the plane,
we seek a curve $\gamma:[0,1]\to\R^2$ that minimizes the distance between these two points in the metric having $F$ as its conformal factor, subject to a constraint associated
with Euclidean area. That is, we seek a solution to the variational problem
\beq
\inf_\gamma E(\gamma)\quad\mbox{with}\quad E(\gamma):=\int_0^1 F(\gamma)\abs{\gamma'}\,dt,\label{isop}
\eeq
where competitors $\gamma:[0,1]\to\R^2$ must satisfy $\gamma(0)=\pmi,\,\gamma(1)=\ppl$ as well as the constraint
\beq
\int_{\gamma} \omega_0=const.\quad\mbox{with}\; \omega_0\;\mbox{given by the $1$-form}\; \omega_0=-p_2dp_1.\label{cons}
\eeq
Since $d\omega_0$ is just the standard Euclidean area form $dp_1\,dp_2$, the isoperimetric nature of the minimizing curve becomes evident--that is, fixing any reference curve,
say $\gamma_0$, joining $\pmi$ to $\ppl$, an application of Stokes' theorem to the closed curve $\gamma-\gamma_0$ reveals that altering the value
of the constant in the constraint $\int_{\gamma} \omega_0=const$ simply amounts to varying the Euclidean area trapped between a competing curve $\gamma$ and the fixed reference curve $\gamma_0.$

What makes this particular isoperimetric
problem non-standard is both the degeneracy of the conformal factor at the two ``wells" $\pmi$ and $\ppl$ and the fact that length is measured with respect to a metric given by $F$
while area is measured with respect to the Euclidean metric. There is a vast literature on isoperimetric problems with assorted assumptions on the conformal factor or ``density,"
though to our knowledge none address this combination of degeneracy and mixture of metrics. We mention, for example, \cite{CJQW,CMV,DDNT,H,RCBM} but of course there are many, many others.

One motivation for our investigation is the connection between such isoperimetric curves--should they exist--and the existence of traveling wave solutions to a
Hamiltonian system associated with the energy functional
\[
H(u):=\int \frac{1}{2}\abs{\nabla u}^2+W(u)\quad\mbox{where}\;W(u)=F^2(u).
\]
The theory of heteroclinic connections to bi-stable gradient-type reaction-diffusion systems in the form of standing or traveling waves
is by now very well-developed in both the scalar and vector-valued settings, including for example, \cite{ABC,AF,AK,AW,FM,M,R,S2} to name but a few studies.
Here, however, rather than seeking traveling wave solutions to a gradient flow $u_t=-\delta H(u)$, we pursue traveling wave solutions
to a Hamiltonian flow associated with $H$, namely,
\beq
\J u_t=\Delta u -\nabla_u W(u)\quad\mbox{where}\quad \J=\left(\begin{matrix} 0 & 1\\ -1& 0\end{matrix}\right)\quad
\mbox{and}\;u=\left(u^{(1)}(x,t),u^{(2)}(x,t)\right),\label{hammy}
\eeq
which conserves the value of $H$ over time. We generally find it more convenient in this approach to view $u$ as $\R^2$-valued but
for $u$ viewed as $\C$-valued, we note that the system would take the form
\[
 iu_t=\Delta u -\nabla_u W(u).
\]
Such a traveling wave solution would follow the ansatz $u=u(x,t)=U(x_1-\nu t)$ for some wave speed $\nu$ with
$U:\R\to\R^2$ then required to solve
\beq
-\nu \,\J U'=U''-\nabla_uW(U)\quad\mbox{on}\;(-\infty,\infty)\quad\mbox{with}\; U(\pm\infty)={\bf p}_{\pm}.\label{hammyode}
\eeq
This problem is itself variational in nature. At least formally, solutions are critical points of
the constrained minimization problem
\begin{eqnarray*}
&&\inf_u H(u)\quad\mbox{among competitors}\;u:\R\to\R^2\;\mbox{satisfying:}\\&&\;U(\pm\infty)={\bf p}_{\pm}\;\mbox{and the constraint}\;
-\int_{\R} u^{(2)} (u^{(1)})'=const.,
\end{eqnarray*}
with the wave speed arising as a Lagrange multiplier associated with the constraint.
It turns out that minimizers of this problem can be found if one identifies minimizers of the isoperimetric problem \eqref{isop},
very much in the spirit of \cite{S1,S2}.

Our paper then is primarily devoted to the study of \eqref{isop}.
Our main result here, stated in Theorem \ref{mainiso}, is the existence of a minimizing curve under certain assumptions on the behavior of $W$ near its minima $\ppl$ and $\pmi.$
We deal with the degeneracy of the conformal factor by first focusing on the problem of finding a constrained minimizer of $E$ joining a non-degenerate point in the plane to either
of the wells of $W$. This one-well isoperimetric problem is solved uniquely for $W$ taking the form of a non-negative quadratic vanishing at the well in Section 2.1,
cf. Theorem \ref{owarea}.  Somewhat surprisingly, the optimal curves are spirals in many cases.

To illustrate how the spiral shape arises, we give a simple derivation of their form for the one-well problem with a radial quadratic potential, $W(z)=|z|^2$, in the unit disk. Suppose we make the assumption that the optimal curve can be parametrized by polar radius $r\in (0,1)$ in the form $z(r)=re^{i\theta(r)}$, and we normalize $\theta(1)=0$. Then the problem reduces to minimizing the length functional
$$   E(z)=\int_0^1 r\sqrt{1 + r^2[\theta'(r)]^2}\, dr \qquad \text{subject to the constraint} \quad
              \int_0^1 \frac{r^2}{2}\theta'(r)\, dr = A,
$$
for given $A$.  By defining $w=z^2(r)=r^2e^{i2\theta(r)}=\rho e^{i\phi(\rho)}$, $\rho\in (0,1)$, the problem is further reduced to minimizing
\begin{gather*}
    L(w):=\int_0^1 |w'(\rho)|\, d\rho=\int_0^1\sqrt{1+\rho^2[\phi'(\rho)]^2}\, d\rho, \quad\text{with} \quad
    \frac14 \int_0^1 \rho\phi'(\rho)\, d\rho = A.
 \end{gather*}
By Jensen's inequality (applied with the convex function $h(x)=\sqrt{1+x^2}$), we obtain an isoperimetric inequality of the form,
$$  L(w)\ge \left(1 + \left[\int_0^1 \rho\,\phi'(\rho)\, d\rho\right]^2\right)^{\frac12} = (1 + 16A^2)^{\frac12}, $$
for any such $w(\rho)=z^2(r)$, with equality holding exactly when $\rho\phi'(\rho)=4A$.  Integrating and returning to the original parametrization, the optimal curve is the spiral
$$  z(r) = r e^{i4A\ln r}, \quad r\in (0,1).  $$
The rigorous derivation of the minimizing curve without the assumption that $\theta=\theta(r)$ and for more general quadratics, done in Section 2.1, follows substantially different methods, using calibrations.

In Section 2.3
we broaden the one-well existence result to cover certain $W$ that are analytic near the well and whose Taylor development begins with the kind of quadratic considered in Section 2.1.
See Theorem \ref{eikonal} for a precise description of the assumptions on $W$. The proofs of these results both come through a type of calibration argument.

In Section 2.2 we present an argument to show that non-degeneracy of the
Hessian of $W$ at its minima is a necessary condition for existence. Put another way, if $W$ is too flat at its minima, then the cost of accumulating length is too cheap in this metric for a given amount of Euclidean area and no minimizer will exist.

We are optimistic that, aside from non-degeneracy of the Hessian of $W$ at $\ppl$ and $\pmi$, these assumptions on the conformal factor can be relaxed and in work presently underway we hope to establish a more general existence result. However, the results obtained in the present article already reveal several important and in some ways unexpected features of the problem. Most crucial to the employment of these curves in the traveling wave context is that their Euclidean arclength is bounded, cf. Propositions \ref{euclid} and \ref{euclid2}, a property not at all obvious to us at the outset, given the degeneracy of the conformal factor $F$. More unexpected is the spiraling nature of the isoperimetric curves when, for example, the conformal factor is purely radial. (See the discussion leading up to Proposition \ref{euclid}.) Such spiraling leads to the existence of spiraling traveling wave solutions to \eqref{hammy}.

In the concluding Section 4 we make explicit the connection between the constrained isoperimetric problem \eqref{isop} and the existence of traveling wave solutions to \eqref{hammy}
satisfying \eqref{hammyode}. Here the main result is Theorem \ref{equivprobs}. Finally, we establish an upper bound for the wave speed associated to these traveling waves. Regarding the traveling wave problem, many crucial questions remain open. For example, can one
make precise the connection between the area constraint value and the wave speed, thereby establishing perhaps the exact interval of attainable $\nu$-values in \eqref{hammyode}?
Also, given the variational nature of the method of solution, do these traveling waves perhaps enjoy some stability property with respect to the Hamiltonian flow \eqref{hammy}?  Are there other non-minimizing traveling waves? These and other
questions remain topics we intend to investigate in the future.

\section{One-well isoperimetric curves via calibrations}

In this section we establish the existence of isoperimetric curves for the case where the degenerate
metric vanishes at one point. The approach involves a type of calibration argument and we first present it for the
case where the conformal factor is given by the square root of a positive definite
quadratic. Then we generalize the argument to the case of certain analytic potentials having
the positive definite quadratic as the leading terms in a Taylor development about the well. We will assume the well is located at the origin, which for one point of degeneracy represents no loss of
generality.
\subsection{The case of a quadratic potential}
To begin, we suppose that $F(p)=\sqrt{W(p)}$ where $W:\R^2\to\R$ is given by the quadratic of the form
\beq
W(p)=p^T\,H_W p.\label{MSA}
\eeq
Here $H_W$ is a constant, real, symmetric, positive definite $2\times 2$ matrix.
We denote by $\lm_1$ and $\lm_2$ the two positive eigenvalues of the matrix $H_W$ and express all points in $\R^2$ and all curves in the plane
with respect to the orthonormal basis $\{v_1,v_2\}$ of eigenvectors of $H_W$. In particular, then $F$ takes the form
\[
F(p)=\sqrt{\lm_1 p_1^2+\lm_2 p_2^2}
\]
and we let
\beq
E(\gamma):=\int_0^1F(\gamma)\abs{\gamma'}\,dt\label{Edefn}
\eeq
for any locally Lipschitz curve $\gamma:[0,1]\to\R^2.$

The argument for existence of a curve minimizing $E$ subject to an area constraint relies on a kind of calibration argument. Before presenting it,
we illustrate the technique on the simpler problem of finding minimizing geodesics connecting the origin to
an arbitrary point $\pz\in\R^2$ in the metric
associated with $E$; that is, we consider the problem without any area constraint.
Introducing the function
\beq\tilde{r}(p):=\frac{1}{2}(\lambda_1\,p_1^2+\lambda_2\,p_2^2)\label{rtilde}
\eeq
and the vector field
\[R(p):=\frac{\nabla\tilde{r}(p)}{F(p)^2}
\]
we now establish:
\bthm\label{genonewell} The unique solution to
\[
\inf\left\{E(\gamma):\;\gamma:[0,1]\to\R^2:\,\gamma\;\mbox{locally Lipschitz},\;\gamma(0)=\pz,\;\gamma(1)={\bf 0}\right\},
\]
is given by the integral curve associated with the vector field $R$.
In other words, the unique minimizer is the curve $z:[0,L_z]\to\R^2$ solving $z'(\ell)=-R(z)$,
$z(0)=\pz$.
\ethm
\begin{rmrk}
The scaling above corresponds to a parametrization by `degenerate arclength' in that $F(z(\ell))\abs{z'(\ell)}=1$ and $L_z=E(z).$
Alternatively, one could follow the integral curve of the linear vector field $F(p)^2R(p)=\left(\lambda_1 p_1,\lambda_2 p_2\right),$
so that writing $\pz=(p_0^{(1)},p_0^{(2)})$ one has $z(t)=\left(p_0^{(1)}e^{-\lambda_1\, t},p_0^{(2)}e^{-\lambda_2\, t}\right)$ with $0\leq t<\infty.$ It is clear from the exponential
decay that the Euclidean arclength of such a geodesic is bounded in terms of $\lambda_1$ and $\lambda_2.$
\end{rmrk}
\begin{rmrk} Note that in the case where $\lambda_1=\lambda_2$, that is in the case of a radial potential, not surprisingly the vector field $R$ is also
radial and the geodesics are just (straight) rays heading into the origin.
\end{rmrk}
\proof
The argument relies on using
$d\tilde{r}$ in the role of a calibration.

Let the curve $z$ be defined as in the statement of the theorem, so that $z'(\ell)=-R(z(\ell))$, with $z(0)=\pz$. We note that all integral curves of $-R$
approach the origin so one has $z(L_z)=(0,0)$. Then we begin by computing
\beq
-\int_{z}d\tilde{r}=\,-\int_0^{L_z} \lambda_1 z_1 z_1'+\lambda_2 z_2 z_2'\,d\ell=\int_0^{L_z}1\,d\ell=L_z=E(z).\label{lz}
\eeq
On the other hand, if $\gamma:[0,L_{\gamma}]\to \R^2$ is any other locally Lipschitz curve starting at $\pz$, ending at the origin and parametrized
by $\ell$, then we have
\begin{eqnarray}
&&-\int_{\gamma}d\tilde{r}=-\int_0^{L_{\gamma}} \lambda_1 \gamma_1 \gamma_1'+\lambda_2 \gamma_2 \gamma_2'\,d\ell\nonumber\\
&&=\,-\int_0^{L_{\gamma}}\left(\lambda_1 \gamma_1,\lambda_2 \gamma_2\right)\cdot\left(\gamma_1',\gamma_2'\right)\,d\ell=
\,-\int_0^{L_{\gamma}}F(\gamma)^2\,R(\gamma)\cdot\gamma'\,d\ell\nonumber\\
&&=\,-\int_0^{L_{\gamma}}F(\gamma)\,R(\gamma)\cdot\frac{\gamma'}{\abs{\gamma'}}\,d\ell\leq  \int_0^{L_{\gamma}}F(\gamma)
\abs{\gamma'}\,d\ell= \int_0^{L_{\gamma}}1\,d\ell=L_{\gamma}=E(\gamma),\label{lg}
\end{eqnarray}
with a strict inequality unless $\gamma'=-R(\gamma)$, i.e. unless $\gamma=z.$ Finally, we note that
\[
\int_{z}d\tilde{r}=\int_{\gamma}d\tilde{r}\quad\mbox{since both curves have the same endpoints}. \]
The minimality of $z$ follows.
\qed
\begin{rmrk}
One upshot of the argument above is that for any $C>0$, all points $p\in\R^2$
lying on the ellipse $\tilde{r}(p)=C$ are equidistant from the origin in the degenerate metric associated with $E$ and are (degenerate) distance $C$
away from the origin, cf. \eqref{lz}.
\end{rmrk}

We are now ready to move on to the one-well isoperimetric problem. To this end, we fix
any real number $A$ and any point $\pz\not=(0,0)$, and introduce the admissible set of curves
\begin{equation}
\mS_{A,\pz}:=\left\{\gamma:[0,1]\to\R^2:\,\gamma\;\mbox{locally Lipschitz},\;\gamma(0)=\pz,\,\gamma(1)=(0,0),\, \mP(\gamma)=A\right\},\label{Sap}\\
\end{equation}
where as before
\[\mP(\gamma):=-\int_0^1\gamma^{(2)}(\gamma^{(1)})'\,dt,\quad
\mbox{and where we write}\quad \gamma=(\gamma^{(1)},\gamma^{(2)}).
\]
We then cast the one-well isoperimetric problem as
\beq
m_0:=\inf_{\mS_{A,\pz}}E(\gamma).\label{purple}
\eeq

Next, introducing two $1$-forms
\beq
\omega_0:=-p_2dp_1\quad\mbox{and}\quad\omega_1:=\frac{1}{\lambda_1+\lambda_2}\left(-\lambda_2 p_2dp_1+\lambda_1 p_1dp_2\right),\label{omone}
\eeq
we observe that
\beq
\mP(\gamma)=\int_{\gamma}\omega_0\quad\mbox{and that}\quad d\omega_0=d\omega_1=dp_1dp_2.\label{sme}
\eeq
We let $\gamma_0$ denote the
 minimizing geodesic starting at $(0,0)$ and ending at $\pz$, as guaranteed by Theorem \ref{genonewell}. Then for any curve $\gamma\in \mS_{A,\pz}$
we conclude from \eqref{sme} that
\[
\int_{\gamma\cup \gamma_0}(\omega_0-\omega_1)=0
\]
and consequently,
\beq
\mP(\gamma)=A\quad\mbox{if and only if}\quad \int_{\gamma}\omega_1=A-C(\gamma_0,\pz)=:\tilde{A}\label{same}
\eeq
where $C(\gamma_0,\pz):=\int_{\gamma_0}(\omega_0-\omega_1).$
We note that the geodesic $\gamma_0$ is simply a convenient choice of reference curve; any curve joining the origin to $\pz$ would do here.

To state the main result of this section we also need to introduce the vector field
\begin{equation}
\Theta(p):=\frac{\left(-\lambda_2 p_2,\lambda_1 p_1\right)}{F(p)^2}
\label{RandT}
\end{equation}
which in the case of equal eigenvalues reduces to simply the $\theta$-direction in standard polar coordinates.

We will prove:

\bthm\label{owarea}
The unique solution to the minimization \eqref{purple} within the class $\mS_{A,\pz}$
is the curve $\gb$ defined as the integral curve of the vector field
\beq
V_\beta(p):=(\cos\beta)\,\Theta(p)-(\sin\beta)\,R(p)\label{Vbeta}
\eeq
that joins $\pz$ to the origin. Here $\beta$ is selected so that
\beq
\frac{\tilde{r}(\pz)}{\lambda_1+\lambda_2}\cot\beta=\tilde{A}.\label{cot}
\eeq
\ethm

We now present the proof of Theorem \ref{owarea}.

\proof
First we note that if we multiply the vector field $V_{\beta}$ by $F^2$ then the corresponding flow, i.e. $\gamma'=F^2(\gamma)V_{\beta}(\gamma)$, is in fact
linear and constant coefficient and takes the form
\begin{equation}
\left(\begin{matrix} \gamma^{(1)}\\ \gamma^{(2)}\end{matrix}\right)'=\Lambda_\beta\left(\gamma^{(1)},\gamma^{(2)}\right):=\left(\begin{matrix}-\lambda_1\sin\beta&-\lambda_2\cos\beta\\
\lambda_1\cos\beta&-\lambda_2\sin\beta\end{matrix}\right) \left(\begin{matrix} \gamma^{(1)}\\ \gamma^{(2)}\end{matrix}\right).\label{linearode}
\end{equation}
One readily checks that the $2\times 2$ matrix on the right has positive determinant and negative trace so either both eigenvalues are real and negative
or they are complex conjugates with negative real part. Either way, all flow lines head into the origin. Of course division by the non-negative factor of $F^2$ simply affects
the parametrization so the same can be said of the integral curves of $V_{\beta}$.

Next we observe that $V_{\beta}$ satisfies the conditions
\beq
V_{\beta}(p)\cdot\Theta(p)=\cos\beta\,\abs{\Theta(p)}^2=\frac{\cos\beta}{F(p)^2}\quad
\mbox{and}\quad V_{\beta}(p)\cdot R(p)=-\sin\beta\,\abs{R(p)}^2=-\frac{\sin\beta}{F(p)^2}
\label{VT}
\eeq
and so since $R\cdot\Theta=0$ one has
\[
\abs{V_{\beta}(p)}=\frac{1}{F(p)}.\]
This implies that the integral curve $\gb$ is parametrized by the degenerate arclength $\ell$, i.e.
\beq
F(\gb(\ell))\abs{\gb'(\ell)}=1.
\eeq
We will denote the degenerate length of $\gb$ by $L_\beta$, so we have $\gb(L_{\beta})=(0,0)$ and $L_{\beta}=E(\gb).$

Now we check that for $\beta$ chosen as in \eqref{cot}, $\gb$ satisfies the area constraint. To this end, we compute that
\begin{eqnarray*}
\int_{\gb}\omega_1&&=\frac{1}{\lambda_1+\lambda_2}\,\int_0^{L_{\beta}}\left(-\lambda_2\gamma_\beta^{(2)},\lambda_1\gamma_\beta^{(1)}\right)\cdot\gamma_\beta'\,d\ell\\
&&=\frac{1}{\lambda_1+\lambda_2}\,\int_0^{L_{\beta}}\left(-\lambda_2\gamma_\beta^{(2)},\lambda_1\gamma_\beta^{(1)}\right)\cdot V_\beta(\gamma_\beta)\,d\ell\\
&&=\frac{1}{\lambda_1+\lambda_2}\cos\beta\,\int_0^{L_{\beta}}\left(-\lambda_2\gamma_\beta^{(2)},\lambda_1\gamma_\beta^{(1)}\right)\cdot \Theta(\gamma_\beta)\,d\ell
=\frac{1}{\lambda_1+\lambda_2}L_\beta\cos\beta.
\end{eqnarray*}
But since
\[
\tilde{r}(\pz)=-\int_{\gb}d\tilde{r}=-\int_0^{L_\beta}\left(\lambda_1\gb^{(1)},\lambda_2\gb^{(2)}\right)\cdot\gb'\,d\ell=
-\int_0^{L_\beta}F^2(\gb)R(\gb)\cdot V_{\beta}(\gb)=\sin\beta\,L_\beta,
\]
we see that indeed $\mP(\gamma)=A$ in light of \eqref{same} and \eqref{cot}. Thus, $\gb\in \mS_{A,\pz}.$

Now let $\ob$ be the $1$-form defined by
\beq
\ob=\cos\beta\,\left(\lambda_1+\lambda_2\right)\omega_1-\sin\beta\,d\tilde{r},\label{ob}
\eeq
cf. \eqref{omone}. The $1$-form $\ob$ will play the role of a calibration in the same manner
 that $d\tilde{r}$ did for the unconstrained problem in the proof of Theorem \ref{genonewell}. Let $\gamma\in\mS_{A,\pz}$ be any competitor in \eqref{purple}. We take $\gamma:[0,L_{\gamma}]\to\R^2$
to be parametrized by degenerate length, i.e. $F(\gamma(\ell))\abs{\gamma'(\ell)}=1$ and denote the angle made between
the vector field $\Theta(\gamma)$ and $\gamma'$ by $\beta_{\gamma}(\ell).$

We observe that
\begin{eqnarray}
&&\int_{\gb} \ob-\int_{\gamma}\ob=\cos\beta\left(\lambda_1+\lambda_2\right)\left(\int_{\gb}\omega_1-\int_{\gamma}\omega_1\right)
-\sin\beta\left(\int_{\gb}d\tilde{r}-\int_{\gamma}d\tilde{r}\right)\nonumber\\
&&=\cos\beta\left(\lambda_1+\lambda_2\right)\,\left(\tilde{A}-\tilde{A}\right)-\sin\beta\,\left((\tilde{r}(0)-\tilde{r}(\pz))-(\tilde{r}(0)-\tilde{r}(\pz))\right)=0.
\label{omeq}
\end{eqnarray}

Then computing the integral of $\ob$ over $\gb$ we find using \eqref{VT} that
\begin{eqnarray}
&&\int_{\gb}\ob=\cos\beta\,\int_{\gb}F^2(\gb)\Theta(\gb)\cdot V_{\beta}(\gb)-\sin\beta\,\int_{\gb}F^2(\gb)R(\gb)\cdot V_{\beta}(\gb)\nonumber\\
&&=\int_0^{L_\beta}\left(\cos^2\beta+\sin^2\beta\right)\,d\ell=L_\beta.\label{gbint}
\end{eqnarray}
Next using that $\abs{\Theta(p)}=\abs{R(p)}=\frac{1}{F(p)}$ we calculate
\begin{eqnarray}
&&
\int_{\gamma}\ob=\cos\beta\,\int_0^{L_{\gamma}}\left(-\lambda_2\gamma^{(2)},\lambda_1\gamma^{(1)}\right)\cdot\gamma'\,d\ell
-\sin\beta\,\int_0^{L_{\gamma}}\left(\lambda_1\gamma^{(1)},\lambda_2\gamma^{(2)}\right)\cdot\gamma'\,d\ell\nonumber\\
&&=\cos\beta\,\int_0^{L_{\gamma}}F^2(\gamma)\Theta(\gamma)\cdot\gamma'\,d\ell
-\sin\beta\,\int_0^{L_{\gamma}}F^2(\gamma)R(\gamma)\cdot\gamma'\,d\ell
\nonumber\\
&&=\cos\beta\,\int_0^{L_{\gamma}}F^2(\gamma)\abs{\Theta(\gamma)}\abs{\gamma'}\cos\beta(\ell)\,d\ell
-\sin\beta\,\int_0^{L_{\gamma}}F^2(\gamma)\abs{R(\gamma)}\abs{\gamma'}\cos{(\beta(\ell)+\pi/2)}\,d\ell\nonumber\\
&&=\int_0^{L_{\gamma}}F(\gamma)\abs{\gamma'}\left(\cos\beta\,\cos\beta(\ell)+\sin\beta\,\sin\beta(\ell)\right)\,d\ell=
\int_0^{L_{\gamma}}\cos\left(\beta-\beta(\ell)\right)\,d\ell\leq L_{\gamma}\label{gmint}
\end{eqnarray}
with equality holding if and only if $\beta(\ell)=\beta$ for all $\ell$, that is, if and only if $\gamma=\gb.$ Combining
\eqref{omeq}, \eqref{gbint} and \eqref{gmint}, we have established that $\gb$ is the unique minimizer.
\qed
\begin{rmrk}If there is no constraint then we are back in the previously solved case of a geodesic which corresponds
here to $\beta=\pi/2.$ In the other extreme, as we choose a larger and larger constraint value $\tilde{A}$ (or equivalently,
as we let $A\to\infty$), then we have $\beta$ approaching $0.$
\end{rmrk}

Given the explicit nature of the system \eqref{linearode}, one can explore the question of how the isoperimetric curve approaches
the origin. We have already noted that the eigenvalues of the constant matrix in that system are either real and negative or complex
with negative real part. Examining this point more closely, one calculates that the eigenvalues, say $\mu_{\pm}$, are given by
\beq
\mu_{\pm}=\frac{-\left(\lambda_1+\lambda_2\right)\sin\beta\pm\sqrt{\left(\lambda_1+\lambda_2\right)^2\sin^2\beta-4\lambda_1\lambda_2}}{2}.
\label{mupl}
\eeq
From this we see that the eigenvalues will be complex if and only if
\beq
\abs{\sin\beta}<\frac{2(\lambda_1\lambda_2)^{1/2}}{\lambda_1+\lambda_2}.\label{spiraling}
\eeq
Referring to \eqref{cot}, a small value of $\sin\beta$ means a large value of $\cot\beta$ and therefore a large
value of the area constraint value $\tilde{A}$, hence of $A$ itself. We conclude that in general, a spiraling isoperimetric
curve solving \eqref{purple} will occur only when the area constraint is sufficiently large. An important exception to this
claim, however, corresponds to the case of a purely radial conformal factor $F$ in which $\lambda_1=\lambda_2.$
In this case the condition \eqref{spiraling} {\it always} holds since the right-hand side equals one. In a similar vein,
if $\abs{\lambda_1-\lambda_2}$ is small then the area constraint threshold beyond which spiraling occurs is small as well.

Pursuing the radial case a bit further, we observe that the system \eqref{linearode} expressed in polar coordinates takes the simple form
\[
r'=-\lambda_1(\sin\beta)\, r,\quad\theta'=\lambda_1\cos\beta
\]
so that in view of \eqref{rtilde} and \eqref{cot} one finds the isoperimetric curve is given by the spiral
\[
\theta(r)=C-\frac{4\tilde{A}}{\abs{\pz}^2}\ln r,\quad 0\leq r\leq \abs{\pz}
\]
for some constant $C$ determined by the argument of $\pz$.

We conclude with a general bound on Euclidean arclength that will be crucial in what follows.
\bprop\label{euclid} There exists a positive constant $L_0$ depending on $A,\,\tilde r(\pz),\,\lambda_1$ and $\lambda_2$ such that
the Euclidean arclength of the minimizing geodesic $\gb$ for \eqref{purple} guaranteed by Theorem \ref{owarea} is bounded by $L_0$.
\eprop
\proof
This follows immediately from the explicit solution to the linear system \eqref{linearode}. One observes that for finite $A$ (hence
finite $\tilde{A}$) and fixed $\pz$ the value of the angle $\beta$ satisfying \eqref{cot} is nonzero. Hence the real part of the eigenvalues
$\mu_{\pm}$ given above are negative and bounded away from zero by a constant depending on $A,\,\pz,\,\lambda_1$ and $\lambda_2$. This means
$\gb$ approaches the zero of $F$ at an exponential rate and the Euclidean arclength bound follows. Regarding the dependence of $L_0$ on $\pz$, we
note that in view of \eqref{cot}, it comes through $\tilde{r}(\pz)$. Since $\tilde{r}(\pz)$ is bounded
above and below by a multiple of $\abs{\pz}^2$, as follows from \eqref{rtilde}, one concludes that in terms of $\pz$, a bound on $L_0$ comes
from a bound on the Euclidean distance from $\pz$ to the well. \qed
\subsection{Nonexistence for more degenerate potentials}\label{ne}

In the previous section we established existence of an isoperimetric curve solving \eqref{purple} under the assumption that the
square of the conformal factor is a non-degenerate quadratic. In this section we illustrate that an assumption of non-degeneracy
is necessary for existence of such an area-constrained minimizer. This, more importantly, also shows this assumption is essential for the existence of minimizers of our main problem \eqref{isop}. We will demonstrate this through the example of a purely radial
potential but it should be clear that non-radial examples can be similarly constructed.

To this end, we take $W:\R^2\to\R$ to be given by $W(p)=\abs{p}^{q'}$ for any $q'>2$. With the conformal factor $F$ given by $\sqrt{W}$ this leads
us to the following version of \eqref{purple}:
\beq
\inf_{\mS_{A,\pz}}E(\gamma)\quad\mbox{where}\quad E(\gamma)=\int_0^1\abs{\gamma}^q\abs{\gamma'}\,dt\label{badradial}
\eeq
for some $q>1$ where $\mS_{A,\pz}$ is again given by \eqref{Sap} for some $A\in\R$. For the sake of concreteness, let us also fix $\pz=(1,0).$ In this radial
setting it is easy to see that without the constraint $\mP(\gamma)=A$, the geodesic as in Theorem \ref{genonewell} is simply given by the
line segment along the $p_1$-axis joining the point $(1,0)$ to the origin, i.e. $\tilde{\gamma}(r):=1-r$ for $0\leq r \leq 1.$ In this unconstrained
setting we compute that the minimal degenerate length is given by $E(\tilde{\gamma})=\frac{1}{1+q}.$

Now consider the following sequence of competitors in $\mS_{A,\pz}$ for the constrained problem. For each positive integer $j$, let $\gamma_j$ consist of the union of the geodesic
 $\tilde{\gamma}$ along with $j$ circles centered at the origin of radius $r_j$ parametrized by $\sigma_j(\theta)=r_j(\cos\theta,\pm\sin\theta)$ for $0\leq\theta\leq 2\pi j$. Noting that $\mP(\tilde{\gamma})=0$, we choose the number $r_j$ so as to satisfy
 the constraint $\mP(\sigma_j)=A$ with the $\pm$ sign determined by the sign of $A$. Thus we have
\[
\mP(\sigma_j)=\pm j\,\pi\,r_j^2=A.
\]
Consequently, one finds that
\begin{eqnarray*}
E(\gamma_j)&&=E(\tilde{\gamma})+E(\sigma_j)=\frac{1}{1+q}+\int_0^{2\pi j}r_j^q\,r_j\,d\theta\\
&&=\frac{1}{1+q}+ 2\pi j\, r_j^{q+1}=\frac{1}{1+q}+2\abs{A}r_j^{q-1}.
\end{eqnarray*}
Since $q>1$, the last term approaches zero as $j\to\infty$, and we see that the infimum of the area-constrained problem is the same as that of the unconstrained problem. However,
it is clear that the line segment $\tilde{\gamma}$ is the only admissible curve yielding an $E$-value of $\frac{1}{1+q}$.  Indeed it is the unique geodesic
 joining $(1,0)$ to $(0,0)$--and since this curve fails to satisfy the constraint,
we conclude that no solution exists for \eqref{purple} in this setting. The flatness of the conformal factor allows for area to be accumulated ``too cheaply" in terms of the
degenerate length.

\subsection{Analytic potentials}

The assumption that $W$ is quadratic is obviously a very strong restriction. In this section
we illustrate one generalization, still based on a calibration argument, for the case of certain analytic
potentials to be described below. To this end, we now suppose that
$F$ in \eqref{Edefn} is given by $F=\sqrt{W}$ where $W:\R^2\to\R$ is real analytic in $p=(p_1,p_2)$ in some neighborhood of the origin. In light of the
necessity of nondegeneracy revealed in Section \ref{ne}, we must assume that in a coordinate system relative to the
orthonormal basis of eigenvectors of $D^2W(0,0)$ one has the Taylor development
\beq
W(p_1,p_2)=\lm_1 p_1^2+\lm_2 p_2^2+O\left(\abs{p_1}^3+\abs{p_2}^3\right)\quad\mbox{for}\;\lm_1,\,\lm_2>0.\label{Taylor}
\eeq

In what follows we invoke multi-index notation so that
for $\alpha=(\alpha_1,\alpha_2)$ with $\alpha_1$ and $\alpha_2$ non-negative integers we write $p^{\alpha}=p_1^{\alpha_1}p_2^{\alpha_2}$ and $\abs{\alpha}:=\alpha_1+\alpha_2$.

We will prove
\bthm\label{eikonal}  Assume $W:\R^2\to [0,\infty)$ is real analytic in a ball $B$ centered at the origin and for every
$\beta\in (0,\pi)$ suppose $W$ can be expressed as
\beq
W(p)=\abs{\nabla g_\beta(p) +\Lambda_{\beta}(p)}^2\quad\mbox{for all}\;p\in B\quad \label{Wexp}
\eeq
 where $\Lambda_{\beta}$ is given in \eqref{linearode},
and
$g_\beta:B\to\R$  is analytic in $B$ with a Taylor series
of the form
\[
g_\beta(p)=\sum_{\abs{\alpha}\geq 3}a_{\alpha}p^\alpha.
\]
Let $\gamma_0$ be the (unique) minimizing geodesic joining $\pz$ to $0$ and again let $\mS_{A,\pz}$ denote the admissible set
given by \eqref{Sap}.
Then there exists an $R>0$ with $B_R(0)\subset B$ such that for
 every $\pz\in B_R(0)$
there exists a solution to
\beq
m_0:=\inf_{\mS_{A,\pz}}E(\gamma)\label{purple2}
\eeq
for every $A$ in an open interval $I(\pz)$ containing $\mP(\gamma_0)$

\ethm

\begin{rmrk} {\it One class of potentials $W$ where condition \eqref{Wexp} is always solvable is the case where $W$ is analytic
and radial. For example, if $W(p)=\abs{p}^2+f(\abs{p})$ where $f(r)=a_1r^4+a_2r^6+\ldots$ then a straight-forward calculation
goes to show that the choice
\[
g_\beta(r)=\int_0^r\left\{(\sin\beta)\,s-\sqrt{\sin^2\beta\,s^2+f(s)}\right\}\,ds
\]
yields a (local) solution to $\eqref{Wexp}$. We emphasize, however, that in this general radial case, letting $\beta$ range between $0$ to $\pi$ may not always yield a corresponding
interval of allowable constraint values $I(\pz)$ consisting of all of $\R$.}
\end{rmrk}

\begin{rmrk}
Note that in the case where $g_\beta\equiv 0$, since $\abs{\Lambda_\beta(p)}^2=\lm_1 p_1^2+\lm_2 p_2^2$, Theorem \ref{eikonal} reduces to the quadratic case of Theorem \ref{owarea}.
\end{rmrk}
\begin{rmrk}
We observe that if one expands the right-hand side of \eqref{Wexp}, then this condition relating $W$ to $g_\beta$ can
be phrased as an eikonal-type equation for $g_\beta$ of the form
\beq
W(p)-\lm_1 p_1^2-\lm_2 p_2^2=L(\nabla g_\beta)+\abs{\nabla g_\beta}^2\label{linlin}
\eeq
where $L$ is given by the linear operator $L(\nabla g_\beta):=2\langle \Lambda_\beta,\nabla g_\beta\rangle.$ We suspect that  for $W$ analytic, non-negative
and having a non-degenerate minimum of zero at the origin, \eqref{Wexp}
can always be locally solved for $g_\beta$ for every $\beta$ between $0$ and $\pi$ but this is not clear to us. What we can say at this point is that one can generate
a formal power series for $g_\beta$; that is, one can determine the Taylor coefficients $a_{\alpha}$ in terms of the Taylor coefficients
 of $W$, but the convergence of
the series remains undetermined. To describe the algorithm for determining the coefficients, suppose we write
\[
W(p_1,p_2)=\lm_1 p_1^2+\lm_2 p_2^2+\sum_{\abs{\alpha}\geq 3}b_{\alpha}p^\alpha
\]
for given coefficients $b_{\alpha}$ and for each integer $n\geq 3$, let $P_n$ be the homogeneous
polynomial of degree $n$ given by $\sum_{\abs{\alpha}=n}a_\alpha p^{\alpha}$ and let $Q_n$ be the homogeneous
polynomial of degree $n$ given by $\sum_{\abs{\alpha}=n}b_\alpha p^{\alpha}$. Then starting from \eqref{linlin} we find
\beq
L(\nabla P_n)=Q_n-\sum_{j+k=n+2}\langle \nabla P_j,\nabla P_k\rangle.\label{recur}
\eeq

Note that $L$ is invertible for $P_n=$ homogeneous polynomial since the condition $L(\nabla P_n)=0$ says
$P_n$ is constant along the integral curves
of the vector field $\Lambda$ used to solve the quadratic case. Since these curves all emanate from the origin and
$P_n(0,0)=0$ this forces $P_n\equiv 0.$

In this way we can formally recursively generate all of the $a_\alpha's$, in particular showing that (up to an additive constant),
$g_\beta$--if it exists in the analytic setting--is unique.
\end{rmrk}
\begin{rmrk} Even if one could prove that such an analytic function $g_\beta$ satisfying \eqref{linlin} always exists, the assumption of analyticity on $W$
still seems much too strong. For this reason we have not pursued the question of convergence of this formal series solution very strenuously.
In work in progress we are pursuing a completely different approach aimed at asserting an existence result
such as Theorem \ref{eikonal} under much milder assumptions on $W$.
\end{rmrk}
\proof
Fix $\pz$ with norm sufficiently small so that $\pz$ lies inside the ball, say, $\frac{1}{2}B$. Then for any fixed $\beta\in (0,\pi)$
define the vector field
$\tilde{V}_\beta:B\to\R^2$--a generalization of $V_{\beta}$ given in \eqref{Vbeta}-- via
\beq
\tilde{V}_\beta=\frac{1}{W}\left(\nabla g_\beta+\Lambda_\beta\right).\label{Vfield}\eeq
We note that since $\nabla g_\beta$ is of quadratic
order while $\Lambda_\beta$ is linear, it is still the case, as it was for $V_{\beta}$, that in a neighborhood of the origin all integral curves of $\tilde{V}_\beta$ lead into the origin.
Now let $\gamma_\beta:[0,L_{\gamma_\beta}]\to\R^2$ denote the unique solution to
\beq\gamma_\beta'(\ell)=\tilde{V}_\beta(\gamma_\beta(\ell)),\quad \gamma(0)=\pz.\label{vf}
\eeq
In light of \eqref{Wexp} we see that
\[
F(\gamma_\beta(\ell))\abs{\gamma_\beta'(\ell)}\,d\ell=1\quad\mbox{for all}\;\ell\in (0,L_{\gamma_\beta})\quad\mbox{and so}\;L_{\gamma_\beta}=E(\gamma_\beta).\]
We claim that $\gamma_\beta$ solves \eqref{purple2} for the value $A=\mP(\gamma_\beta).$

To this end, define the $1$-form $\tilde{\omega}_\beta$ via the formula
\[
\tilde{\omega}_\beta:=\ob+dg_\beta,\]
with $\ob$ given as before by \eqref{ob}.

Let $\gamma_1:[0,L_1]\to\R^2$ be any other competitor in $\mS_{A,\pz}$, also parametrized by $\ell$, so that $L_1=E(\gamma_1)$. Then an easy calculation gives
\[
\int_{\gamma_\beta}\tilde{\omega}_\beta=\int_0^{L_{\gamma_\beta}}\left(\nabla g_\beta(\gamma_\beta)+\Lambda_\beta(\gamma_\beta)\right)\cdot\gamma_\beta'\,d\ell=
\int_0^{L_{\gamma_\beta}}F(\gamma_\beta)\abs{\gamma_\beta'}\,d\ell=L_{\gamma_\beta},\]
while a similar calculation gives
\[
\int_{\gamma_1}\tilde{\omega}_\beta=\int_0^{L_1}\left(\nabla g_\beta(\gamma_1)+\Lambda_\beta(\gamma_1)\right)\cdot\gamma_1'\,d\ell\leq
\int_0^{L_1}F(\gamma_1)\abs{\gamma_1'}\,d\ell=L_1,\]
with equality holding iff $\gamma_1'=\tilde{V}_\beta(\gamma_1)$, i.e. iff $\gamma_1=\gamma_\beta.$ Furthermore,
since $\gamma_\beta$ and $\gamma_1$ share the same endpoints one has that $\int_{\gamma_\beta}dg_\beta=\int_{\gamma_1}dg_\beta$ and since $\mP(\gamma_\beta)=\mP(\gamma_1)$
we conclude that $\int_{\gamma_\beta} \tilde{\omega}_\beta=\int_{\gamma_1}\tilde{\omega}_\beta$
by the same reasoning as that which led to \eqref{omeq}. It follows that $\gamma_\beta$ uniquely solves \eqref{purple2} among competitors sharing its
constraint value.

Finally, we note that the minimizing (unconstrained) geodesic $\gamma_0$ corresponds to the integral curve of the vector field $\tilde{V}_\beta$ for $\beta=\pi/2$ in the same
manner as was shown in Theorem \ref{genonewell} for the purely quadratic case.
Hence, letting $\beta$ range between $0$ and $\pi$, and noting the continuous dependence of the solution $\gamma_\beta$ to
\eqref{vf} on $\beta$, we generate an interval $I(\pz)$ of constraint values $A$ containing $\mP(\gamma_0)$ for which \eqref{purple2} is solvable.
\qed
\vskip.2in
Since the linear part of the vector field $F^2V_\beta$ is $\Lambda_\beta$, one still has exponential approach of the minimizing isoperimetric curve to the well.
Consequently, one obtains the analog of Proposition \ref{euclid} in this more general setting:
\bprop\label{euclid2} There exists a positive constant $L_0$ depending on $A,\,\pz,\,\lambda_1$ and $\lambda_2$ such that
the Euclidean arclength of the minimizing geodesic for \eqref{purple2} guaranteed by Theorem \ref{eikonal} is bounded by $L_0$.
\eprop

\section{The two-well isoperimetric problem}
\subsection{Existence of a minimizing curve}

 The analysis in the previous section allows us to find an optimal curve for the area constrained problem joining a non-degenerate point to a zero of the conformal factor under
 various circumstances. It remains to use this to prove the existence of a geodesic to our main problem, namely \eqref{isop}.
In this section we show how to establish the existence of a minimizing isoperimetric curve joining two points of degeneracy for the conformal
 factor via a direct method, provided one is able to solve the one-well problem of the previous section for all constraint values.

To this end, we make the following assumptions on $W:=F^2$. We assume $W:\R^2\to [0,\infty)$ is continuous and vanishes at precisely
two distinct points $\ppl$ and $\pmi$. We assume the existence of two disjoint balls $B_+$ and $B_-$ centered at $\ppl$ and $\pmi$ respectively,
such that within each ball $W$ is smooth with a positive definite Hessian at $\ppl$ and $\pmi$. Most crucially, we assume:
\vskip.1in
\noindent{\it {\bf Assumption A.} For every point $\pz$ in either $B_+\setminus \{\ppl\}$ or $B_-\setminus \{\pmi\}$ and for every real number $A$, there exists a minimizer to the one-well problem:
\[
\inf E(\gamma)\quad\mbox{where as before}\; E(\gamma):=\int_0^1 F(\gamma)\abs{\gamma'}\,dt,
\]
and the infimum is taken over all Lipschitz continuous curves $\gamma:[0,1]\to B_+$ (resp. $B_-$) such that $\gamma(0)=\pz,\;\gamma(1)=\ppl$ (resp. $\pmi$) and such that
$\mP(\gamma)=A.$ Furthermore, we assume the Euclidean arclength of this minimizer can be bounded by a constant depending on $W,\,\pz$ and $A$.}
\vskip.1in

We point out that the above assumption has been verified for the case of $W$ given by a non-degenerate quadratic in $B_+$ and $B_-$ through Theorems \ref{owarea} and \ref{euclid},
and it holds for $W$ analytic in the balls that satisfy the condition \eqref{Wexp} through Theorem \ref{eikonal} and \ref{euclid2}, provided the interval $I(\pz)$ arising in Theorem \ref{eikonal} consists of all of $\R$ for all
$\pz\in B_+\setminus \{\ppl\}$ and all $\pz\in B_-\setminus \{\pmi\}$.

Finally, we make the following assumption about the continuous function $F$ outside the two balls: we assume that for some number $m_0>0$ one has the lower bound
\beq F(p)\geq m_0\quad\mbox{for all}\; p\in \R^2\setminus \left(B_+\cup B_-\right).\label{lb}
\eeq
We observe that under the nondegeneracy assumptions made above on $W$ near its minima, it also follows that
\beq
F(p)\geq c_0\abs{p-{\bf p}_{\pm}}\quad\mbox{for all}\;p\in B_{\pm}\label{lbound}
\eeq
for some $c_0>0.$

Now for any real number $A_0$, we let
\beq
\mS_{A_0}:=\big\{\gamma:[0,1]\to\R^2:\,\gamma\;\mbox{locally Lipschitz},\;\gamma(0)={\bf p}_-,\;\gamma(1)={\bf p}_+,\; \mP(\gamma)=A_0\big\},
\label{Sapo}
\eeq
where again
\[\mP(\gamma):=-\int_0^1\gamma^{(2)}(\gamma^{(1)})'\,dt,\quad
\mbox{with}\quad \gamma=(\gamma^{(1)},\gamma^{(2)}).
\]

\bthm\label{mainiso} Under the assumptions above, for any $A_0\in\R$ there exists a minimizer $\gamma_*\in \mS_{A_0}$ to the problem
\beq
m_*:=\inf_{\mS_{A_0}}E(\gamma).\label{purple1}
\eeq
\ethm
\proof Let $\{\gamma_j\}\subset\mS_{A_0}$ denote a minimizing sequence so that $E(\gamma_j)\to m_*$ as $j\to\infty.$
We will argue that we can replace this sequence by a modified sequence that is still a minimizing sequence in $\mS_{A_0}$ but
which has uniformly bounded Euclidean arclength. Once this is achieved it will follow fairly easily that the direct method
yields the existence of a minimizer.
\vskip.2in
\noindent 1. Uniform bound on Euclidean arclength of a minimizing sequence.\\

To this end, we denote by $t_j^{(1)}$ the smallest time such that $\gamma_j(t_j^{(1)})\in\partial B_-$ so that, in particular,
$\gamma_j(t)\in B_-$ for all $t\in [0,t_j^{(1)}).$ Writing ${\bf p}_-=(p_-^{(1)},p_-^{(2)})$ we appeal to \eqref{lbound} to see that
\begin{eqnarray}
\abs{\mP\left(\gamma_j([0,t_j^{(1)}])\right)}&&:=\abs{-\int_0^{t_j^{(1)}} \gamma_j^{(2)}(\gamma_j^{(1)})'\,dt}=
\abs{\int_0^{t_j^{(1)}} \left(\gamma_j^{(2)}-p_-^{(2)}\right)(\gamma_j^{(1)})'\,dt+p_-^{(2)} \int_0^{t_j^{(1)}}(\gamma_j^{(1)})'\,dt}
\nonumber\\
&&\leq \int_0^{t_j^{(1)}} \abs{\gamma_j^{(2)}-p_-^{(2)}}\abs{\gamma_j'}\,dt+\abs{p_-^{(2)}}\abs{\gamma_j^{(1)}(t_j^{(1)})-p_-^{(1)}}\nonumber\\
&&\leq \frac{1}{c_0}E(\gamma_j)+\abs{{\bf p_-}}R_-\leq \frac{ m_*}{c_0}+\abs{{\bf p_-}}R_-+1=:K_-
\label{areaone}
\end{eqnarray}
We then perform our first modification on $\{\gamma_j\}$ by replacing the initial part of the curve $\gamma_j$, namely $\gamma_j([0,t_j^{(1)}])$, by the solution
to the one-well problem \eqref{purple} with the constraint value $A$ given by $\mP\left(\gamma_j([0,t_j^{(1)}])\right)$ and the role of $\pz$ taken up by $\gamma_j(t_j^{(1)}).$
 We then perform a similar modification on the final portion of $\gamma_j$ to enter the ball
$B_+,$ solving the one-well isoperimetric problem in $B_+$ subject to a constraint value bounded by say $K_+$, in analogy with \eqref{areaone}.
By this procedure we create a new curve that still lies in $\mS_{A_0}$  and this modification only reduces the value of $E$ so the modified sequence is still a minimizing sequence.

Now if each element of the minimizing sequence only enters the ball $B_-$ during this first time interval and only enters $B_+$ during one time interval at the end
(i.e. with right endpoint $1$), then we easily obtain a uniform arclength bound on the modified minimizing sequence as follows. For the time interval during which $\gamma_j$ resides inside $B_-$ or inside $B_+$ we are assuming via Assumption A that the one-well solution satisfies a uniform bound on Euclidean arclength--again, an assumption that is verified in
Proposition \ref{euclid} for quadratic potentials or Proposition \ref{euclid2} for appropriate analytic potentials. For the (one) time interval
 during which $\gamma_j$ lies in the complement of $B_-\cup B_+$, we use assumption \eqref{lb} on $F$ to conclude that
\beq
\int_{\{t:\,\gamma_j(t)\not\in\,B_-\cup B_+\}}\abs{\gamma_j'}\,dt\leq \frac{1}{m_0}\int_{\{t:\,\gamma_j(t)\not\in\,B_-\cup B_+\}} F(\gamma_j)\abs{\gamma_j'}\,dt\leq
\frac{1}{m_0}E(\gamma_j)<\frac{1}{m_0}(m_*+1).\label{easybd}\eeq
Combining these uniform bounds in $B_-,\,B_+$ and $\R^2\setminus (B_-\cup B_+)$, we obtain a bound on the total Euclidean arclength of $\gamma_j$ that is independent of $j$.

It certainly could happen, however, that $\gamma_j$ re-enters $B_-$ (resp. $B_+$) other than during the initial (resp. final) time interval.
In this case, further modifications are required. Note that besides the initial arc, any parts of the curve $\gamma_j$ that enter $B_-$ must consist
of a union of curves both starting and ending on $\partial B_-.$ Let $\Gamma_j^{(1)}$ denote the curves in this union that never get closer than Euclidean
distance $1/2$ to ${\bf p}_-$. Then let $\Gamma_j^{(2)}$ denote the rest, namely those curves in $B_-$ with endpoints on $\partial B_-$ that do get closer than distance $R_-/2$.

The curves in $\Gamma_j^{(1)}$ do not need to be modified since a bound on their Euclidean arclength comes in a manner similar to \eqref{easybd}, utilizing \eqref{lbound} to see that
$F(\gamma_j)\geq \frac{c_0\,R_-}{2}$ for these curves:
\beq
\int_{\Gamma_j^{(1)}}\abs{\gamma_j'}\,dt\leq\frac{2}{c_0\,R_-} \int_{\Gamma_j^{(1)}}F(\gamma_j)\abs{\gamma_j'}\,dt\leq \frac{2}{c_0\,R_-}E(\gamma_j)\leq \frac{2}{c_0\,R_-}(m_*+1).\label{easy2}
\eeq

Regarding the curves in $\Gamma_j^{(2)}$, we first note that the number of curves in this union is bounded independent of $j$. This is because for each curve in this union,
say $\tilde{\gamma}:[a,b]\to \overline{B_-}$,  one has a lower bound on its contribution to the total energy, namely
\begin{eqnarray}
\int_a^b F(\tilde{\gamma})\abs{\tilde{\gamma}'}\,dt&&\geq \int_{[a,b]\cap\,\{t:\,R_-/2\leq \abs{\tilde{\gamma}(t)-\pmi}\leq R_-\}}F(\tilde{\gamma})\abs{\tilde{\gamma}'}\,dt\nonumber\\
&&\geq\frac{c_0\,R_-}{2} \int_{[a,b]\cap\,\{t:\,R_-/2\leq \abs{\tilde{\gamma}(t)-\pmi}\leq R_-\}}\abs{\tilde{\gamma}'}\,dt\geq \frac{c_0\,R_-^2}{2}.\label{finite}
\end{eqnarray}
Thus, since say $E(\gamma_j)\leq m_*+1$, there can be at most $\frac{2(m_*+1)}{c_0\,R_-^2}$ curves in $\Gamma_j^{(2)}.$

Now we introduce a number $r_j$ to denote the closest any curve in $\Gamma_j^{(2)}$ gets to ${\bf p}_-$, i.e.
\[
r_j:=\min_{\gamma\in\Gamma_j^{(2)}}\left(\min_t\abs{\gamma(t)-{\bf p}_-}\right).\]
If $\limsup_{j\to\infty} r_j=:\overline{r}>0$ then again no further modifications are needed, since then we can obtain a uniform upper bound on the
total Euclidean arclength of all curves in $\Gamma_j^{(2)}$ as in \eqref{easy2} with the factor of $\frac{2}{R_-}$ replaced by say a factor of $\frac{1}{\overline{r}}.$

If, however, $r_j\to 0$, we will modify the element of $\Gamma_j^{(2)}$ that passes closest to ${\bf p}_-$ as follows: First we view this element, say $\gamma:[a,b]\to \overline{B_-}$
as the union of two curves, say $\gamma_1$ starting on $\partial B_-$ and
ending at a point $r_j$ distance from ${\bf p}_-$ and $\gamma_2$ starting at this closest point and ending (presumably) somewhere else on $\partial B_-$.
If for any $j$, $r_j=0$ then the `offending'
curve $\gamma$ must pass through ${\bf p}_-$ itself and we simply replace $\gamma_1$ by a minimizing isoperimetric curve guaranteed by Assumption A starting at $\gamma(a)\in\partial B_-$, ending at ${\bf p}_-$ with constraint value
$A=\mP(\gamma_1).$ Similarly, we replace $\gamma_2$ by the isoperimetric curve starting at ${\bf p}_-$, ending at $\gamma(b)\in\partial B_-$, subject to area constraint $A=\mP(\gamma_2).$
This replacement can only lessen the value of $E$ while preserving the global condition $\mP=A_0$ so we still have a minimizing sequence lying in $\mS_{A_0}$. Furthermore, the total Euclidean arclength
of the two new curves is bounded by a constant independent of $j$.

The replacement procedure in the case where $r_j\to 0$ but a given $r_j$ is positive is slightly different. In this case, denote by ${\bf q}_j$ a point on $\gamma$ closest to ${\bf p}_-$ (so that
$\abs{{\bf q}_j-{\bf p}_-}=r_j$) and as above
split $\gamma$ into $\gamma_1$ (coming into ${\bf q}_j$ from $\partial B_-$) and $\gamma_2$ (going out of ${\bf q}_j$ to $\partial B_-$).
Then augment $\gamma_1$ by the (directed) line segment joining ${\bf q}_j$ to ${\bf p}_-$ and augment $\gamma_2$ by the directed line segment joining ${\bf p}_-$ to ${\bf q}_j.$
Call these two curves with added line segments $\tilde{\gamma}_1$ and $\tilde{\gamma}_2.$ Note that
\[
\mP(\tilde{\gamma}_1)+\mP(\tilde{\gamma}_2)=\mP(\gamma_1)+\mP(\gamma_2)=\mP(\gamma),
\]
due to the cancellation of the `areas' of the two oppositely directed line segments.
Then, as in the previous case, we replace $\tilde{\gamma}_1$ and $\tilde{\gamma}_2$ by isoperimetric minimizers joining $\gamma(a)$ to ${\bf p}_-$ and ${\bf p}_-$ to $\gamma(b)$ respectively
 and enclosing `areas' $\mP(\tilde{\gamma}_1)$ and $\mP(\tilde{\gamma}_2).$ These two curves must have $E$-values less than or equal to $\tilde{\gamma}_1$ and $\tilde{\gamma}_2$ respectively and as always have uniformly bounded Euclidean arclength,
 but due to the added line segments, their $E$-values may exceed $E(\gamma_1)$ and/or $E(\gamma_2).$ However, since by \eqref{MSA} or \eqref{Wexp},
 $F^2$ is quadratic to leading order we have
 \[
 E(\mbox{line segment})\leq C\int_0^{r_j}r\,dr=O(r_j^2).
 \]
Thus, since $r_j\to 0$, the extra energy of the modified sequence approaches zero. Hence, after this modification we still have a minimizing sequence in $\mS_{A_0}$.

 Having replaced this curve in $\Gamma_j^{(2)}$ that passed closest to ${\bf p}_-$ by a pair of one-well isoperimetric curves through $\pmi$ (for which we have a Euclidean arclength bound), we remove this curve from the collection $\Gamma_j^{(2)}$ and now check again whether we still have $\overline{r}:=\limsup_{j\to\infty}r_j= 0$.
 If now $\overline{r}>0$, we  have our uniform Euclidean bound as in \eqref{easy2} with the factor of $\frac{2}{R_-}$ replaced by say a factor of $\frac{1}{\overline{r}}.$ If $\overline{r}$ is still zero,
 we repeat this procedure, at most $\frac{2(m_*+1)}{c_0\,R_-^2}$ times for every element of $\Gamma_j^{(2)}$ until, if necessary, every offending element of $\Gamma_j^{(2)}$ has been replaced by a pair of one-well minimizers.

We perform the same modification procedure, as necessary, in a neighborhood of $B_+$. In light of \eqref{easybd}, \eqref{easy2} and the above discussion, the result is a minimizing sequence in $\mS_{A_0}$
that satisfies a uniform bound on its Euclidean arclength.
\vskip.2in
\noindent
2. Existence via direct method.
\vskip.1in
Having established the existence of a minimizing sequence (still denoted by $\{\gamma_j\}$) satisfying a uniform bound
\beq
\int_0^1\abs{\gamma_j'(t)}\,dt=:L_j\quad\mbox{where}\quad L_j\leq L_*\label{unifEB}
\eeq
for some constant $L_*$ independent of $j$, we now reparametrize these curves to have constant speed $L_j$.
Thus, denoting the new parameter by $\tau$ (but still using the notation $\gamma_j$) we now have
\beq
\gamma:[0,1]\to\R^2\quad\mbox{satisfying}\quad\abs{\gamma_j'(\tau)}=L_j\;\mbox{for a.e.}\;\tau\in(0,1).\label{unitspeed}
\eeq
Integrating this condition, note that we also have a uniform bound on the modulus of $\gamma_j$ of the form
\[
\abs{\gamma_j(\tau)}\leq \abs{{\bf p_-}}+L_*\quad\mbox{for all}\;\tau\in [0,1].
\]
Putting these two bounds together, we obtain, in particular, the uniform bound
\[
\norm{\gamma_j}_{W^{1,\infty}((0,1);\R^2)}\leq C_0\quad\mbox{for some constant}\;C_0\;\mbox{independent of}\;j.
\]
This provides us with enough compactness to proceed with the direct method. After passing to
subsequences (not denoted here), one has the existence of a Lipschitz map which in particular satisfies $\gamma_*\in H^1((0,1);\R^2)$ such that
\beq
\gamma_j\to \gamma_*\;\mbox{uniformly on}\;[0,1],\quad \gamma_j'\rightharpoonup\gamma_*'\;\mbox{weakly in}\;L^2((0,1);\R^2).
\label{convergence}
\eeq
In light of \eqref{convergence}, it is immediate that $\gamma_*(0)={\bf p}_-$ and $\gamma_*(1)={\bf p}_+$ and the product
of strongly and weakly convergent sequences also readily yields that
\[
\lim_{j\to\infty}\int_0^1\gamma_j^{(2)}(\tau)(\gamma_j^{(1)})'(\tau)\,d\tau=
\int_0^1\gamma_*^{(2)}(\tau)(\gamma_*^{(1)})'(\tau)\,d\tau
\]
so $\mP(\gamma_*)=\lim_{j\to\infty}\mP(\gamma_j)=A_0$.
Hence $\gamma_*\in \mS_{A_0}$.

It remains to check the lower-semi-continuity of $E$ under these convergences. For this let us write
\[
E(\gamma_j)=\int_0^1\left(F(\gamma_j)-F(\gamma_*)\right)\abs{\gamma_j'}\,d\tau+\int_0^1 F(\gamma_*)\abs{\gamma_j'}\,d\tau.
\]
Clearly, \eqref{unifEB}, \eqref{unitspeed} and \eqref{convergence} imply that
\[
\lim_{j\to\infty}\abs{\int_0^1\left(F(\gamma_j)-F(\gamma_*)\right)\abs{\gamma_j'}\,d\tau}\leq
L_*\lim_{j\to\infty}\int_0^1\abs{F(\gamma_j)-F(\gamma_*)}\,d\tau=0
\]
since $F$ is continuous,
so we have \[
\liminf_{j\to\infty}E(\gamma_j)=\liminf_{j\to\infty}\int_0^1 F(\gamma_*)\abs{\gamma_j'}\,d\tau.\]
Then for any fixed $\sigma\in L^2((0,1);\R^2)$ such that $\abs{\sigma}\leq 1$ the weak convergence of derivatives implies that
\[
\liminf_{j\to\infty}\int_0^1 F(\gamma_*)\abs{\gamma_j'}\,d\tau\geq
\liminf_{j\to\infty}\int_0^1 F(\gamma_*)\big(\gamma_j'\cdot\sigma\big)\,d\tau
=\int_0^1 F(\gamma_*)\big(\gamma_*'\cdot\sigma\big)\,d\tau.\]
Taking the supremum over all such $\sigma$ we arrive at the desired inequality,
\[
m_*=\liminf_{j\to\infty}E(\gamma_j)\geq \int_0^1 F(\gamma_*)\abs{\gamma_*'}\,d\tau=E(\gamma_*)\]
so $\gamma_*$ solves the problem \eqref{purple1}.
\qed

\subsection{A parameter regime without bubbling}

Our proof of Theorem \ref{mainiso} does not rule out the possibility that the minimizer $\gamma_*:[0,1]\to\R^2$
passes through either $\ppl$ or $\pmi$ at various instances $t\in (0,1)$. Depending on the`topography' of the graph
of $W$ away from the wells and the size of the constraint value $A_0$, it seems quite possible that one or more such
`bubbles' looping from one of the wells back to itself does indeed occur for the minimizer. In this section, however, we identify a
setting where this cannot occur. This will in particular allow us in the next section to make the connection between such isoperimetric curves
and traveling wave solutions to the Hamiltonian system \eqref{hammy} mentioned in the introduction.

We will show that no bubbling occurs provided the constraint value $\mP$ only differs from that of length-minimizing geodesic
by a small amount:
\bthm\label{nobubbles}
Assume that $W$ satisfies the conditions that held in Theorem \ref{mainiso}.  Let $\gamma_0$ be an unconstrained minimizer
of $E(\gamma)$ among locally Lipschitz curves $\gamma:[0,1]\to\R^2$ satisfying $\gamma(0)=\pmi$ and $\gamma(1)=\ppl.$
Then there exists a number $\e_0>0$ such that for any $\e$ with $0<\abs{\e}<\e_0$, any solution $\gamma_*$ to
\beq
\inf \left\{E(\gamma):\; \gamma:[0,1]\to\R^2\;\mbox{locally Lipschitz},\; \gamma(0)=\pmi,\,\gamma(1)=\ppl,\,\mP(\gamma)=\mP(\gamma_0)+\e\right\}
\label{epvar}
\eeq
satisfies $\gamma_*(t)\not\in\{\pmi,\ppl\}$ for all $t\in (0,1).$ That is, $\gamma_*$ has no bubbles.
\ethm
\proof
Recall from the proof of Theorem \ref{eikonal} that within the balls $B_-$ and $B_+$ of radius $R_-$ and $R_+$ about $\pmi$ and $\ppl$ respectively, the minimizer $\gamma_*$ follows the integral curves
of a vector field. Since there is one vector field associated with each of the two wells we can write  $V_{\pm}(p)=\nabla g_{\pm} +\Lambda_{\pm}$, where $\Lambda_{\pm}$ is the linear vector field depending on the positive
eigenvalues of $D^2 W$ at ${\bf p}_{\pm}$ as in $\eqref{Vfield}$ and $g_{\pm}$ are analytic functions starting at cubic order in a Taylor series about ${\bf p}_{\pm}.$
In the quadratic case where $g_{\pm}\equiv 0$, these integral curves are explicit and one can check that
for some positive radii $r_-\leq R_-$
and $r_+\leq R_+$ such integral curves must exit the balls $B_{r_-}(\pmi)$ and $B_{r_+}(\ppl).$ Since $\nabla g_{\pm}$ represents a small perturbation
of the quadratic setting, the same must be true (in perhaps smaller balls) for the more general case.
That is, there cannot be any homoclinic orbits of these vector fields
lying entirely in small balls about the wells. Alternatively,
one can draw this conclusion by appealing to the Hartman-Grobman Theorem, based on the the fact that the eigenvalues of the linearization of the system $\gamma'=V_{\pm}(\gamma)$ given in \eqref{mupl} have negative real parts.

As a consequence, the only possible bubble around, for instance, $\pmi$ would consist of a curve, say $\tg:[0,T]\to\R^2$ with $\tg(0)=\tg(T)=\pmi$ and
$\abs{\tg(t_1)-\pmi}=r_-$ for some $t_1\in (0,T).$ Invoking \eqref{lbound} such a curve must satisfy the lower energy bound
\beq
E(\tg)\geq
\int_0^{t_1}F(\tg)\abs{\tg\,'}\,dt\geq c_0\int_0^{t_1}\abs{\tg-\pmi}\abs{\tg\,'}\,dt\geq \frac{c_0}{2}\abs{\int_0^{t_1}\frac{d}{dt}\abs{\tg-\pmi}^2\,dt}=\frac{c_0 r_-^2}{2}.
\label{lbgt}
\eeq
In addition to the bubble, any competitor in \eqref{epvar} must also include one curve joining
 $\pmi$ to $\ppl$ and so the total energy of any competitor must have an energy of at least $E(\gamma_0)+\frac{c_0 r_-^2}{2}$.

On the other hand, under these assumptions on $W$, the conformal factor $F=\sqrt{W}$ also satisfies an upper bound of the form
\beq
F(p)\leq c_1\abs{p-\pmi}\quad\mbox{in}\;B_-\;\mbox{for some positive constant}\;c_1.
\label{upperF}
\eeq
We will now build a competitor in the variational problem \eqref{epvar} of the form $\gamma_0\cup\beta_\e$  where $\beta_\e$ is designed so as to satisfy the
constraint $\mP(\beta_\e)=\e$. Specifically we take $\beta_\e$ to consist of the closed semi-circle centered at $\pmi$ of radius $r_\e<R_-$ with vertical diameter.
That is, $\beta_{\e}$ is the union of the two parametric curves $\theta\mapsto \pmi+r_\e(\cos\theta,\sin\theta)$ for $-\pi/2\leq \theta\leq \pi/2$ and $t\mapsto
\pmi+t\,(0,1)$ for $-r_\e\leq t\leq r_\e$.
We determine $r_\e$ through the calculation (made for $\e>0$) that
\[
\e=\mP(\beta_\e)=-\int_0^{\pi}\beta_\e^{(2)}\beta_\e^{(1)}\,'\,d\theta=r_\e^2\int_0^{\pi}\sin^2\theta\,d\theta=\pi\frac{r_\e^2}{2}.
\]
We note that the vertical diameter of the semi-circle $\beta_\e$ does not contribute to the constraint since $\beta_\e^{(1)}\,'=0$ there. If $\e<0$, we reverse
the orientation of $\beta_\e$.

Then we use \eqref{upperF} to compute the upper bound
\beq
E(\beta_\e)\leq c_1\int |\beta_\e||\beta_\e'|\leq c_1r_\e\int\abs{\beta_\e'}=c_1r_\e\left(\pi r_\e+2r_\e\right)=c_1(\pi+2)\frac{2}{\pi}\e,
\label{upen}
\eeq
leading to an upper bound on the energy of the minimizer of \eqref{epvar} of $E(\gamma_0)+c_1(\pi+2)\frac{2}{\pi}\e$.
 Choosing
 \[\e_0=\frac{c_0}{c_1}\left(\frac{\pi}{2+\pi}\right)r_-^2\]
 we see in light of \eqref{lbgt} that for $\abs{\e}<\e_0$ any bubble would lead to a total energy exceeding our upper bound in \eqref{upen} and so is precluded.
 \qed

\section{Application to bi-stable Hamiltonian traveling waves}
Now we return to one of our motivations for considering the degenerate isoperimetric problem: the construction
of traveling wave solutions to the bi-stable Hamiltonian system
\beq
\J u_t=\Delta u -\nabla W_u(u)\label{RPBC}
\eeq
where  $\J$ denotes the symplectic matrix given by
\[
\J=\left(\begin{matrix} 0 & 1\\ -1& 0\end{matrix}\right),
\]
$W:\R^2\to [0,\infty)$ vanishes at two points and  $u:\R^n\to\R^2$, $n\geq 1$. As mentioned in the introduction, \eqref{RPBC} is the Hamiltonian flow associated with
\beq
\int \frac{1}{2}\abs{\nabla u}^2 + W(u)\,dx.
\eeq

In order to view our isoperimetric curves, appropriately reparametrized, as classical traveling wave solutions to \eqref{RPBC}, we will assume in this section that in addition to satisfying the conditions of Theorem \ref{mainiso}, $W$ is a sufficiently smooth function throughout $\R^2.$
One could certainly assume less and then make the connection in the context of weak solutions but we choose not to do so here.
Here we seek a traveling wave solution to \eqref{RPBC} that joins the two minima of $W$ taking the form
\[u(x,t)=U(x_1-\nu t)=U(y)\quad\mbox{for some}\;\nu\in\R\]
where $U(\pm\infty)={\bf p}_{\pm}.$
Such a $U:\R\to\R^2$ would have to satisfy the ODE
\beq
-\nu \J U'=U''-\nabla_uW(U)\quad\mbox{for}\;-\infty<y<\infty,\quad U(\pm\infty)={\bf p}_{\pm}.\label{ODE}
\eeq
As noted previously, in the case of a standing wave where $\nu=0$ in \eqref{ODE}, a solution, the so-called heteroclinic
connection, is already known to exist cf. \cite{ABC,AF,AK,S2}. Our aim is to establish solutions with non-zero wave speed $\nu.$
\subsection{Traveling wave existence under a generic assumption on heteroclinics}

We begin with the observation that \eqref{ODE} has a variational formulation.
To see this, fix any $A\in\R$, and consider the constrained minimization problem given by
\beq
\mu_*:=\inf_{\mG_{A}}H(u)\quad\mbox{where}\;H(u):=\int_{-\infty}^\infty \frac{1}{2}\abs{u'}^2 + W(u)\,dy\label{mu1}
\eeq
and where the admissible set $\mG_{A}$ is defined by
\[
\mG_{A}:=\left\{u:\R\to\R^2:\;u-g\in H^1(\R;\R^2),\;\mP(u)=A\right\}.
\]
In the above definition, $g:\R\to\R^2$ is any smooth function such that $g(y)\equiv \ppl$ for $y$ sufficiently
large and $g(y)\equiv \pmi$ for $y$ sufficiently negative, and
\[
\mP(u):=\,-\int_{-\infty}^\infty u^{(2)} (u^{(1)})'\,dy\quad \mbox{where}\;u=(u^{(1)},u^{(2)}).
\]
Then one has
\bprop\label{lagrange}
Any minimizer of \eqref{mu1}--in fact any critical point--satisfies
\eqref{ODE} for some $\nu\in\R$.
\eprop
\proof Invoking the theory of Lagrange multipliers we can assert the existence of a number $\lambda$ such that any critical
point $u$ satisfies the condition
\[
\delta H(u)=\lambda\,\delta \mP(u)
\]
where $\delta$ refers to the first variation. Since
\[
 \delta H(u;\tilde{u})=\int u'\cdot\tilde{u}'+\nabla W_u(u)\cdot \tilde{u}\,dy\quad\mbox{and}\quad
\delta \mP(u;\tilde{u})=-\int \J u'\cdot \tilde{u}\,dy\quad\mbox{for any}\;\tilde{u}\in H^1(\R),
\]
the result follows with $\lambda$ playing the role of the wave speed $\nu.$\qed
\vskip.1in
Now we recall our degenerate isoperimetric problem:
\beq
m_*:=\inf_{\mS_{A}}E(\gamma)\label{deeppurple}
\eeq
where again $E(\gamma)=\int_0^1 \sqrt{W(\gamma)}\abs{\gamma'}\,dt$ and
\[
\mS_{A}:=\big\{\gamma:[0,1]\to\R^2:\,\gamma\;\mbox{locally Lipschitz},\;\gamma(0)={\bf p}_-,\;\gamma(1)={\bf p}_+,\; \mP(\gamma)=A\big\}
\]
Note that the value of both $E$ and $\mP$ are invariant under reparametrization.

We would like to reparametrize the solution $\gamma_*$ to \eqref{deeppurple} guaranteed by Theorem \ref{mainiso} and then
identify it as a traveling solution to \eqref{RPBC}. However, in the event that $\gamma_*$ possesses bubbles, we would need to
work only with a truncation of $\gamma_*$ consisting of the one piece joining $\pmi$ to $\ppl$. Such a truncation procedure has the disadvantage
of a loss of information: we have no control on how much, if any, of the constraint value $A_0$ is taken up by the truncation. In particular,
it is conceivable that the Lagrange multiplier associated with this truncation vanishes. Then, after the reparametrization
described below, one would only have a heteroclinic orbit, that is a standing wave solution to \eqref{ODE} in which $\nu=0.$

Instead, in the theorem below, we place ourselves in the context of Theorem \ref{nobubbles}, where bubbling is precluded.
Before stating it, we introduce what will be a convenient reparametrization of
any Lipschitz curve $\gamma:[0,1]\to\R^2$ satisfying $\gamma(0)=\pmi$, $\gamma(1)=\ppl$ and $\gamma(t)\not\in\{\pmi,\ppl\}$
for $t\in (0,1)$ and originally parametrized with constant speed $L$:

We let $y: (0,1)\to\R$ be defined through
\beq
y(t):=\frac{1}{\sqrt{2}}\int_{\frac{1}{2}}^t \frac{\abs{\gamma'(\tau)}}{\sqrt{W(\gamma(\tau))}}\,d\tau=\frac{L}{\sqrt{2}}\int_{\frac{1}{2}}^t \frac{1}{\sqrt{W(\gamma(\tau))}}\,d\tau.\label{reparam}
\eeq
Then we define say $U=U(y)$ mapping $\R$ to $\R^2$ via $U(y)=\gamma(t(y))$ so that $U$ will satisfy the condition
\beq
\abs{U'(y)}=\sqrt{2}\sqrt{W(U(y))}.\label{ab}
\eeq
\bthm\label{equivprobs}
For $\e_0$ given by Theorem \ref{nobubbles} and for any non-zero $\e\in (-\e_0,\e_0)$, let $\gamma_*=\gamma_*(t)$ be any minimizer of \eqref{epvar}, which by Theorem \ref{mainiso} must exist and by Theorem \ref{nobubbles} possesses no bubbles.
We take this curve to be parametrized
by constant velocity $\abs{\gamma_*'(t)}=L$, where $L$ is its Euclidean arc length so that $\gamma_*:[0,1]\to\R^2.$
Let $U_{*}:\R\to\R^2$ given by $U_{*}(y):=\gamma_*(t(y))$ be
the reparametrization of $\gamma_*$ given by \eqref{reparam}.
Then we have:\\

\noindent
(i) The function $U_{*}$  lies in $\mG_{A}$ with $A=\mP(\gamma_0)+\e$ and minimizes $H$ within this class, i.e. $H(U_{*})=\mu_*.$
\\
(ii) Let $\mathcal{C}$ denote the set of all heteroclinic connections between $\pmi$ and $\ppl$.
 Then for all non-zero $\e\in (-\e_0,\e_0)$ such that
\beq
\mP(\gamma)\not=\mP(\gamma_0)+\e\quad\mbox{for all}\;\gamma\in \mathcal{C},
\label{bade}
\eeq
the function $U_{*}$ solves \eqref{ODE} with non-zero speed $\nu$.
\ethm
\noindent {\it Proof of (i)}. Given any $u\in \mG_{A}$ with $A=\mP(\gamma_0)+\e$ we use $2ab\leq a^2 + b^2$ to note that
\[
H(u)\geq \sqrt{2}\int_{-\infty}^\infty\sqrt{W(u)}\abs{u'(y)}\,dy=\sqrt{2}E(u).\]
Since $E$ is invariant under reparametrization, we have
\[
\inf_{\mG_{A}}E(u)=\inf_{\mS_{A}}E(u)\]
and consequently $\mu_*\geq \sqrt{2}m_*.$ On the other hand, reparametrizing the minimizer $\gamma_*$ of \eqref{deeppurple} as in \eqref{reparam},
we obtain $U_{*}\in\mG_{A}$ satisfying \eqref{ab}, and so
\[
\sqrt{2}m_*=\sqrt{2}E(U_{*})=H(U_{*})\geq \mu_*,
\]
so it follows that $\mu_*=\sqrt{2}m_*$ and $H(U_{*})=\mu_*.$\\
\noindent {\it Proof of (ii)}. By Proposition \ref{lagrange}, $U_{*}$ solves \eqref{ODE} as well.
Furthermore, $\nu$ cannot be zero since this would make $U_{*}$ a heteroclinic orbit satisfying
$\mP(U_{*})=\mP(\gamma_0)+\e$, violating condition \eqref{bade}.
\qed
\vskip.2in
The condition providing for a non-zero wave speed given by condition \eqref{bade} is clearly generic, in that for it to fail for all non-zero $\e$ in the interval $(-\e_0,\e_0)$,
there would, in particular, have to exist a continuum of heteroclinic orbits connecting $\pmi$ to $\ppl$. However, verifying \eqref{bade} in practice may not
 be so easy. In the next section, we present an alternative set of sufficient conditions
that guarantee the existence of a traveling wave solution to \eqref{ODE} having non-zero speed.

\subsection{Sufficient conditions for existence of a traveling wave}

The goal of this section is to present a set of sufficient conditions under which there are traveling wave solutions with speed $\nu\neq 0$. We will conclude
this section with an explicit example of a potential $W$ meeting these criteria.

 For this purpose, we begin with the following hypotheses on $W\in C^3(\R^2;\R)$, the first two of which have been operable throughout this article:
\begin{enumerate}
\item[(W1)]  $W(p)\ge 0$ and $W(p)=0$ iff $p=\mathbf{p_\pm}$.
\item[(W2)]  $\mathbf{p_\pm}$ are non-degenerate global minima of $W$, ie, $D^2 W(\pm\mathbf{p})\ge \lambda>0$.
\item[(W3)]   There exist constants $R_0,c_0>0$ such that
$$\nabla W(p)\cdot p\ge  c_0  |p|^2
\quad \text{for all $p\in\R^2$ with} \quad |p|\ge R_0.
$$
\end{enumerate}
We should remark that conditions (W1) and (W3) in particular imply the lower bound \eqref{lb} on $F:=\sqrt{W}.$

An important role in what follows will be played by heteroclinic connections, that is, solutions to \eqref{ODE} for which
$\nu=0.$ We recall the notation of Theorem \ref{equivprobs} where we denote the set of all such solutions by $\mathcal{C}$.
We also wish to distinguish those heteroclinic connections that minimize the functional $H$ within the set
$$  \mathcal{S}:=\{ U\in H^1_{loc}(\R;\R^2): \ U(x)\to \mathbf{p}_\pm \ \text{as $x\to\pm\infty$}\}.
$$
We will denote this subset of $\mathcal{C}$ by $\mathcal{C}_0$.
The existence of minimizing heteroclinic connections under hypotheses (W1)--(W3) is well-known (see \cite{AF} for existence under the most general hypotheses).

In order to find traveling waves with nonzero speed, we will look nearby the minimizing heteroclinic connections.  To do this, we must consider the linearized system, or the second variation of the energy $H$, at a minimizing heteroclinic $U_0\in\mathcal{C}_0$, given by
$$  \delta^2H(U_0)[\Phi] := \int_{-\infty}^\infty
       \left[ |\Phi'|^2 + \Phi\cdot D^2 W(U_0)\Phi\right] dy. $$
 This is well defined for $\Phi\in H^1(\mathbb{R}; \mathbb{R}^2)$.  Since any $U_0\in\mathcal{C}_0$ is an (unconstrained) minimizer of $H$, then clearly $\delta^2 H(U_0)[\Phi]\ge 0$ as a quadratic form on $\YY$.
We note that by the translation invariance of this problem on $(-\infty,\infty)$, zero is an eigenvalue of this operator, with $L^2$ eigenfunction $U'_0(y)$. (See Lemma~\ref{spectral} below.)  If, however, zero is an isolated {\em simple} eigenvalue, then the minimizer $U_0$ will be suitably isolated from the other heteroclinics, and we may show that
curves minimizing $H$ in $\mathcal{G}_A$ with $A$ near $\mP(U_0)$ have non-zero wave speed.

\begin{thm}\label{BIG}
Assume (W1)--(W3), and in addition:
\begin{enumerate}
\item[(W4)] There exist neighborhoods $B_\rho(\ppm)$ in which $W$ is either purely quadratic or real analytic and satisfying \eqref{Wexp};
\item[(W5)]  For every $U_0\in\mathcal{C}_0$, zero is an isolated simple eigenvalue of $\delta^2 H(U_0)$.
\end{enumerate}
Then, there exists $\e_*>0$ so that for every
$A\in (\mP(U_0)-\e_*, \mP(U_0)+\e_*)$ with $A\not=\mP(U_0)$, there exists a traveling wave solution $U_A$ with wave speed $\nu=\nu_A\neq 0$.
\end{thm}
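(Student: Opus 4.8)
The plan is to combine the equivalence between the isoperimetric problem and the constrained minimization of $H$ (Theorem~\ref{equivprobs}) with a quantitative, spectral argument that rules out the ``bad'' set of constraint values where the wave speed could degenerate to zero. The only obstruction to applying Theorem~\ref{equivprobs}(ii) is condition~\eqref{bade}: we must show that near $\mP(U_0)$ there are no heteroclinic connections with constraint value exactly $\mP(U_0)+\e$ for small $\e\neq 0$. Equivalently, we must show that the function $\mathcal{C}_0\ni U\mapsto \mP(U)$ does not assume values arbitrarily close to (but different from) $\mP(U_0)$ --- or more precisely, that minimizing heteroclinics are isolated in the constraint parameter. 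Hypotheses (W4) and (W5) are exactly what is needed to get this isolation.

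First I would record the spectral preliminaries as a lemma (this is what the excerpt's forward-reference ``Lemma~\ref{spectral}'' anticipates): for $U_0\in\mathcal{C}_0$, the operator $-\frac{d^2}{dy^2}+D^2W(U_0(y))$ on $L^2(\R;\R^2)$ is nonnegative with essential spectrum bounded below by $\lambda>0$ (using (W2), since $D^2W(\ppm)\geq\lambda$ and $U_0(y)\to\ppm$), has $U_0'$ in its kernel, and under (W5) zero is simple and isolated with a spectral gap down to some $\delta_0>0$. Next, I would use this to show that minimizing heteroclinics are \emph{rigid} in the following sense: modulo translation, $\mathcal{C}_0$ is locally a discrete set, so there is a neighborhood of $U_0$ in, say, $C^1_{loc}$ (or in the $g$-shifted $H^1$ topology) containing no other element of $\mathcal{C}_0$ except translates of $U_0$. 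Translates all have the same value of $\mP$ (since $\mP$ is translation- and reparametrization-invariant), so $\mP$ is locally constant $=\mP(U_0)$ on $\mathcal{C}_0$ near $U_0$; combined with a compactness/``no loss of mass'' argument for minimizing heteroclinics (standard under (W1)--(W3), cf.~\cite{AF}), this yields an $\e_*>0$ such that no $U\in\mathcal{C}_0$ has $\mP(U)\in(\mP(U_0)-\e_*,\mP(U_0)+\e_*)\setminus\{\mP(U_0)\}$.

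Then I would assemble the proof. Fix $\e_*$ small enough that it is below the $\e_0$ of Theorem~\ref{nobubbles} (legitimate since (W4) supplies the balls needed for that theorem) and below the isolation radius just obtained. For $A=\mP(U_0)+\e$ with $0<|\e|<\e_*$: by Theorem~\ref{mainiso} a minimizer $\gamma_*$ of the isoperimetric problem \eqref{epvar} exists, by Theorem~\ref{nobubbles} it is bubble-free, and by Theorem~\ref{equivprobs}(i) its reparametrization $U_A:=U_*$ lies in $\mathcal{G}_A$ and minimizes $H$ there; by Proposition~\ref{lagrange} it solves \eqref{ODE} with some wave speed $\nu_A$. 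It remains to exclude $\nu_A=0$. If $\nu_A=0$ then $U_A$ is a heteroclinic connection, hence an element of $\mathcal{C}$; moreover $H(U_A)=\mu_*$ means $U_A$ is $H$-minimizing among all of $\mathcal{S}$ (since $\mu_*\leq H(U)$ for any heteroclinic $U$, the constrained infimum over $\mathcal{G}_A$ being no larger than the unconstrained one only if... --- here I would argue directly: a zero-speed minimizer of the $A$-constrained problem is automatically a critical point of the unconstrained $H$ on $\mathcal{S}$, hence lies in $\mathcal{C}$, and minimality in $\mathcal{G}_A$ forces it into $\mathcal{C}_0$ by comparing energies). Then $U_A\in\mathcal{C}_0$ with $\mP(U_A)=\mP(U_0)+\e$ and $0<|\e|<\e_*$, contradicting the isolation established above. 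Hence $\nu_A\neq 0$, proving the theorem.

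The main obstacle is the rigidity/isolation step: turning the \emph{spectral} simplicity hypothesis (W5) into the \emph{topological} statement that minimizing heteroclinics near $U_0$ are exhausted by translates, and in particular that $\mP$ is locally constant on $\mathcal{C}_0$ near $U_0$. This requires (a) an implicit-function-theorem argument on the nonlinear operator $U\mapsto U''-\nabla W(U)$, whose linearization is $\delta^2H(U_0)$, handled on a space that quotients out translation so that the simple kernel $\R U_0'$ becomes trivial --- giving a local uniqueness of heteroclinics up to translation; and (b) a global compactness argument (concentration-compactness on the line, excluding splitting into configurations that would lower the energy, via (W3) for the far-field coercivity) to pass from ``no nearby heteroclinics in $\mathcal{C}_0$'' to ``no $\mathcal{C}_0$-element with $\mP$-value in a full punctured interval.'' I expect (b) to be essentially standard and (a) to be the crux where (W5) is used decisively; care is needed because $\delta^2H(U_0)$ being merely nonnegative (not coercive) means the quotient operator is only coercive on the orthogonal complement of its kernel, which is exactly where (W5)'s spectral-gap content enters.
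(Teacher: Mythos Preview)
Your overall architecture---produce a constrained minimizer $U_A$ via Theorem~\ref{equivprobs}(i) and Proposition~\ref{lagrange}, then argue by contradiction that $\nu_A\neq 0$ using the spectral gap (W5)---matches the paper's. The spectral lemma and the role of compactness are correctly anticipated. But there is a genuine gap at the step where you pass from ``$\nu_A=0$'' to ``$U_A\in\mathcal{C}_0$.'' If $\nu_A=0$ then certainly $U_A\in\mathcal{C}$; however, minimality in the \emph{constrained} class $\mathcal{G}_A$ only gives $H(U_A)=\mu_*(A)\geq\inf_{\mathcal{S}}H$, with inequality typically strict when $A\neq\mP(U_0)$. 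So $U_A$ may be a non-minimizing heteroclinic, and your isolation result for $\mathcal{C}_0$ does not apply to it. (The sentence that trails off---``the constrained infimum over $\mathcal{G}_A$ being no larger than the unconstrained one only if\dots''---is exactly where this breaks.) Relatedly, condition~\eqref{bade} is a statement about all of $\mathcal{C}$, not about $\mathcal{C}_0$, so restricting attention to $\mathcal{C}_0$ does not verify it.

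The paper closes this gap by arguing with a \emph{sequence} $\e_n\to 0$ rather than a fixed $\e$. The competitor $\gamma_0\cup\beta_{\e_n}$ constructed in the proof of Theorem~\ref{nobubbles} gives the quantitative upper bound $H(U_{A_n})\leq H(U_0)+c\,\e_n$, so if each $\nu_{A_n}=0$ the $U_{A_n}\in\mathcal{C}$ form a minimizing sequence for $H$ in $\mathcal{S}$. A compactness lemma for minimizing sequences (Lemma~2.4 of \cite{ABGui}) then yields, after translation, $U_{A_n}\to\tilde U_0\in\mathcal{C}_0$ in $H^1$. The isolation lemma actually needed is correspondingly stronger than yours: any sequence in $\mathcal{C}$ (not $\mathcal{C}_0$) with $H$-values tending to the minimum and geometrically distinct from $\tilde U_0$ must stay a fixed $L^2$-distance from $\tilde U_0$; its proof uses (W5) through the coercivity of $L=\delta^2H(\tilde U_0)$ on $\{\tilde U_0'\}^\perp$, applied to the normalized difference $(U_{A_n}-\tilde U_0)/\|U_{A_n}-\tilde U_0\|_{L^2}$, together with an \emph{a priori} $L^\infty$ bound on all heteroclinics coming from (W3). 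Your IFT sketch is morally close, but it must be formulated for near-minimal-energy elements of $\mathcal{C}$, and you need the energy upper bound to place the hypothetical zero-speed minimizers into that regime.
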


Making sure $\e_*<\e_0$ from Theorem \ref{nobubbles}, the hypotheses (W1)-(W4) allow us to apply Theorem \ref{equivprobs}(i) to assert
existence of a minimizer $U_*$ of $H$ within the class $\mathcal{G}_A$ for $A\in (\mP(U_0)-\e_*, \mP(U_0)+\e_*)$ with $A\not=\mP(U_0)$.
Hence, by Proposition \ref{lagrange}, $U_*$ solves the differential equation \eqref{ODE} as well. The new assertion here, however, is that
with the additional assumption (W5), the wave speed is non-zero.

Later in this section, we will present an explicit example of a double-well potential $W$ meeting all the criteria (W1)-(W5).

Noting that the strong assumption (W4) is only used to establish the existence of a minimizer, we can separate the issue of non-zero wave speed
from that of existence by proving:
\begin{prop}\label{simple}
 Assume (W1)--(W3) and (W5).  Then, for any $U_*\in\mC_0$, there exists $\e_1>0$ such that
 if $A\in (\mP(U_*)-\e_1, \mP(U_*)+\e_1)$ with $A\not=\mP(U_*)$ and if $U_A:\R\to\R^2$ attains the minimum of $H$ in $\mG_{A}$, then $U_A$ solves \eqref{ODE} with wave speed $\nu=\nu_A\neq 0$.
 \end{prop}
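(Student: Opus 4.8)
The plan is to argue by contradiction: suppose no such $\e_1$ exists, so that there is a sequence $A_j\to\mP(U_*)$ with $A_j\neq\mP(U_*)$ and corresponding minimizers $U_j:=U_{A_j}$ of $H$ in $\mG_{A_j}$, each of which solves \eqref{ODE} with wave speed $\nu_j=0$; that is, each $U_j$ is a heteroclinic connection, hence lies in $\mathcal{C}$. First I would record the basic compactness: since $U_j$ minimizes $H$ in $\mG_{A_j}$ and $A_j\to \mP(U_*)$, the energies $H(U_j)$ are uniformly bounded (compare with a fixed competitor built from $U_*$ plus a small loop as in the proof of Theorem \ref{nobubbles}), so $U_j - g$ is bounded in $H^1(\R;\R^2)$; moreover (W3) forces the $U_j$ to stay in a fixed compact set. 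Using the now-standard concentration/translation analysis for heteroclinics under (W1)--(W3) (no energy can leak to $\pm\infty$ because $W>0$ away from the wells, and no second transition layer can form because that would cost a full extra heteroclinic energy, contradicting $H(U_j)\to H(U_*)$ — this is exactly the argument behind the existence theory in \cite{AF}), after translating each $U_j$ appropriately one obtains $U_j\to U_\infty$ strongly in $H^1_{loc}$, with $U_\infty\in\mathcal{C}_0$ and $H(U_\infty)\le \liminf H(U_j)=H(U_*)$; since $U_*$ is itself a minimizing heteroclinic, $H(U_\infty)=H(U_*)$ and the convergence is in fact strong in $\dot H^1(\R;\R^2)$ with $\mP(U_j)\to\mP(U_\infty)$, so $\mP(U_\infty)=\mP(U_*)$.

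Now comes the point where (W5) enters. I would like to conclude that, after the translations, $U_\infty$ must actually equal $U_*$ and, more importantly, that the $U_j$ are forced onto the one-parameter translation family of $U_*$ for large $j$, which then contradicts $\mP(U_j)=A_j\neq\mP(U_*)=\mP(U_*(\cdot - a))$ for every $a$ (translation does not change $\mP$, since $\mP$ is the signed area and is translation-invariant). The mechanism: by (W5), zero is an isolated simple eigenvalue of $\delta^2 H(U_\infty)$ with eigenfunction $U_\infty'$, so by the implicit function theorem / Lyapunov--Schmidt reduction applied to the map $U\mapsto -U'' + \nabla W(U)$ on the affine space $g + H^1$, the set of heteroclinic solutions near $U_\infty$ is precisely the translation orbit $\{U_\infty(\cdot - a)\}$, a smooth one-dimensional manifold transverse to the level sets of $\mP$ except in the $\mP$-direction which is degenerate. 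Since each $U_j\in\mathcal{C}$ is a heteroclinic converging to $U_\infty$ in the appropriate topology, for $j$ large $U_j$ lies in this neighborhood and hence $U_j = U_\infty(\cdot - a_j)$ for some $a_j$; but then $\mP(U_j)=\mP(U_\infty)=\mP(U_*)$, contradicting $\mP(U_j)=A_j\neq\mP(U_*)$. This yields the claimed $\e_1$.

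The main obstacle I anticipate is the rigidity step — showing that \emph{all} nearby heteroclinics are translates of $U_\infty$, purely from simplicity of the zero eigenvalue. Two technical difficulties must be handled carefully: (a) the linearized operator $\delta^2 H(U_\infty) = -\partial_{yy} + D^2W(U_\infty(y))$ acts between $H^1(\R;\R^2)$ and $H^{-1}$, and one needs that away from the simple kernel it is an isomorphism onto the orthogonal complement — this requires knowing that the essential spectrum is bounded below by $\min(\lambda_1^\pm,\lambda_2^\pm)>0$ (a consequence of (W2) and the convergence $D^2W(U_\infty(y))\to D^2W(\ppm)$ as $y\to\pm\infty$), which I would isolate as a short spectral lemma (apparently Lemma \ref{spectral}, referenced in the excerpt); (b) the Lyapunov--Schmidt reduction must be carried out on the affine space $g+H^1$ where translations are only well-defined after subtracting the boundary profile, so one must check that the relevant maps are $C^1$ and that the translation parameter $a$ is genuinely the only free parameter. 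A secondary obstacle is making the concentration-compactness step fully rigorous — ruling out loss of compactness to infinity and splitting — but under (W1)--(W3) this is essentially classical and I would cite \cite{AF} (or \cite{ABC}) for the relevant dichotomy, adding only the short observation that $\mP$ passes to the limit because the convergence is strong in $\dot H^1$.
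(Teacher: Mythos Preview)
Your overall architecture matches the paper's exactly: argue by contradiction, show the $U_j$ form a minimizing sequence for the unconstrained problem, extract a limit $U_\infty\in\mC_0$ after translation via a compactness lemma, and then derive a contradiction from (W5). The paper packages the compactness step as Lemma~\ref{convlemma} (quoted from \cite{ABGui}, giving full $H^1$ convergence, not just $H^1_{loc}$), and the energy upper bound $H(U_j)\to H(U_*)$ via the explicit bubble competitor $\beta_\e$ from Theorem~\ref{nobubbles}, just as you suggest.

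The one substantive difference is in how (W5) is used to obtain rigidity. You propose an implicit function theorem / Lyapunov--Schmidt argument to show that the zero set of $U\mapsto -U''+\nabla W(U)$ near $U_\infty$ is exactly the translation orbit; this is correct (with $W\in C^3$ the map is $C^1$ between the right Sobolev spaces, and by Lemma~\ref{spectral} and (W5) the linearization is Fredholm of index zero with one-dimensional kernel). The paper instead proves an isolation lemma (Lemma~\ref{isolemma}) by hand: for any sequence $V_n\in\mC$ geometrically distinct from $U_0$ with $H(V_n)\to H(U_0)$, the $L^2$ distance from $U_0$ stays bounded below. Its proof normalizes $\Phi_n=V_n-U_0$ to lie in $\{U_0'\}^\perp$, Taylor-expands the equation, and bootstraps (via the uniform $L^\infty$ bound of Lemma~\ref{apriori}) to show $(\tilde\Phi_n,L\tilde\Phi_n)\to 0$, contradicting coercivity of $L$ on the orthogonal complement. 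Your IFT route is cleaner and more conceptual; the paper's direct estimate is more self-contained and avoids having to verify the functional-analytic setup for the nonlinear map. Either way the conclusion is the same, and the contradiction with $\mP(U_j)=A_j\neq\mP(U_*)$ follows since $\mP$ is translation invariant.
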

 Once we establish Proposition \ref{simple}, this will then complete the proof of Theorem \ref{BIG} by choosing $\e_*=\min\{\e_0,\e_1\}$.

\smallskip

In order to prove Proposition \ref{simple} we require the following lemmas involving the minimization problem for $H$ in $\mS$. The first is a restatement of Lemma 2.4 of \cite{ABGui}, on the $H^1$ compactness of minimizing sequences:

\begin{lemma}\label{convlemma} Assume (W1)--(W3).
Let $\{V_n\}\subset \mS$ be a minimizing sequence for $H$.  Then there exist translations $\tau_n\in\R$, a function $U_0\in\mC_0$, and a subsequence of $\{\tau_n\}$ along which\\ $\| V_n(\cdot-\tau_n) - U_0\|_{H^1(\R)}\to 0$ as $n\to\infty.$
\end{lemma}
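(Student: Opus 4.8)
The plan is to deduce the statement directly from Lemma~2.4 of \cite{ABGui}, whose hypotheses are met here, but I will first outline how that proof goes so the mechanism is transparent. Fix a minimizing sequence $\{V_n\}\subset\mS$ with $H(V_n)\to\mu_0:=\inf_{\mS}H$. The first step is to establish a uniform bound: since $W(p)\geq\lambda|p-\ppm|^2$ near the wells by (W2) and $W(p)\gtrsim|p|^2$ for $|p|\geq R_0$ by (W3), we get coercivity of $H$ in the sense that $\int |V_n'|^2$ and the relevant weighted $L^2$-norm of $V_n-g$ are controlled by $H(V_n)\leq\mu_0+1$. Hence $V_n-g$ is bounded in $H^1(\R;\R^2)$ and in particular in $L^\infty$ by Sobolev embedding in one dimension; this keeps the $V_n$ in a fixed compact region of $\R^2$ away from issues at infinity in the target.

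The second step is the standard concentration-compactness dichotomy for this one-dimensional problem: because the wells $\pmi,\ppl$ are the only zeros of $W$ and are nondegenerate, a minimizing sequence cannot ``split'' into pieces connecting a well to itself (a bubble) without strictly raising the energy in the limit — any such piece would contribute at least a fixed positive amount of energy, as in the bubble estimate \eqref{lbgt} appearing later in the paper. So after extracting translations $\tau_n$ that recenter the ``transition region'' of $V_n$ (say, the last time $V_n$ leaves $B_\rho(\pmi)$), one rules out vanishing and splitting and concludes that $V_n(\cdot-\tau_n)$ converges weakly in $H^1_{loc}$, and in fact strongly in $H^1(\R)$, to some limit $U_0$. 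The third step is to identify the limit: lower semicontinuity of $H$ under weak $H^1$ convergence forces $H(U_0)\leq\mu_0$, while $U_0$ inherits the boundary conditions $U_0(\pm\infty)=\ppm$ from the no-vanishing/no-splitting analysis, so $U_0\in\mS$ and therefore $H(U_0)=\mu_0$; thus $U_0$ is a minimizer, i.e.\ $U_0\in\mC_0$. Finally, $H(V_n(\cdot-\tau_n))\to H(U_0)$ together with weak convergence upgrades to norm convergence of the $H^1$ seminorm, and combined with the $L^2$ control this yields $\|V_n(\cdot-\tau_n)-U_0\|_{H^1(\R)}\to 0$ along a further subsequence.

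The main obstacle — and the reason the cited lemma does real work — is the second step: controlling the possible loss of compactness at infinity along the real line. One must show that no mass escapes to $\pm\infty$ and that the sequence does not break into several far-apart transition layers (which, for a double-well potential, would be homoclinic-type bubbles). This is exactly the place where hypotheses (W1)--(W3) are essential: (W1)--(W2) give that bubbles cost a definite amount of energy, and (W3) provides the coercivity at large $|p|$ that prevents escape in the target. Since \cite{ABGui} carries out precisely this argument under these hypotheses, the proof here reduces to verifying that our $\mu_0$, $\mS$, and $W$ satisfy their standing assumptions and then invoking their Lemma~2.4 verbatim.
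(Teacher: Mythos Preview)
Your proposal is correct and matches the paper's own treatment exactly: the paper does not give an independent proof but simply cites this result as a restatement of Lemma~2.4 of \cite{ABGui}, noting only that although \cite{ABGui} assumes a symmetric potential, that symmetry hypothesis plays no role in the proof of their Lemma~2.4 and so the result applies under (W1)--(W3) as stated here. Your outline of the underlying concentration-compactness argument is a welcome elaboration; the one point worth adding explicitly is precisely this remark about the irrelevance of the symmetry assumption in \cite{ABGui}, since otherwise a reader checking the citation may worry that the standing hypotheses there are not met.
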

We remark that while \cite{ABGui} concerns symmetric potentials $W$, the proof of Lemma 2.4 in \cite{ABGui} does not use the symmetry assumption and so applies to any of the $W$ considered
in this article.

Next, we prove a simple {\it a priori} estimate for all heteroclinic solutions $V\in\mC$ (not only the energy minimizers constituting $\mC_0$).

\begin{lemma}\label{apriori}  Assume (W1) and (W3).
There exists a constant $K>0$, depending only on $W$, such that for any heteroclinic $V\in \mC$ one has
\begin{equation}\label{apest}  \| V \|_{L^\infty(\R)} \le K.  \end{equation}
\end{lemma}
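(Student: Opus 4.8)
The plan is to derive the $L^\infty$ bound directly from the ODE \eqref{ODE} together with the coercivity-type hypothesis (W3), using a maximum-principle argument applied to the scalar function $\varphi(y):=\abs{V(y)}^2$. First I would compute $\varphi'' = 2\abs{V'}^2 + 2V\cdot V''$ and substitute $V''$ from \eqref{ODE} with $\nu=0$, namely $V''=\nabla W(V)$ (since heteroclinics in $\mC$ are exactly the solutions of \eqref{ODE} with zero speed). This gives
\[
\varphi''(y) = 2\abs{V'(y)}^2 + 2\,\nabla W(V(y))\cdot V(y) \geq 2\,\nabla W(V(y))\cdot V(y).
\]
By (W3), whenever $\abs{V(y)}\geq R_0$ we have $\nabla W(V(y))\cdot V(y)\geq c_0\abs{V(y)}^2 > 0$, so $\varphi'' > 0$ at any point where $\varphi \geq R_0^2$; that is, $\varphi$ is strictly convex on the (open) set $\{y:\varphi(y) > R_0^2\}$.

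Next I would use the boundary behavior: since $V\in\mC\subset\mS$, we have $V(y)\to\ppm$ as $y\to\pm\infty$, so $\varphi(y)\to\abs{\ppm}^2$, both of which are finite. Hence $\varphi$ is bounded near $\pm\infty$, and in particular $\limsup_{y\to\pm\infty}\varphi(y)\leq \max\{\abs{\ppl}^2,\abs{\pmi}^2\}=:M_0^2$. Suppose for contradiction that $\sup_y\varphi(y) > \max\{R_0^2, M_0^2\}$. Then, because $\varphi$ is continuous and has finite limits at $\pm\infty$, it attains an interior maximum at some point $y_0$ with $\varphi(y_0) > R_0^2$. But at an interior maximum we need $\varphi''(y_0)\leq 0$, contradicting the strict convexity $\varphi''(y_0) > 0$ established above. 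Therefore $\sup_y\varphi(y)\leq\max\{R_0^2,M_0^2\}$, which yields \eqref{apest} with $K:=\max\{R_0,\abs{\ppl},\abs{\pmi}\}$, a constant depending only on $W$ (through $R_0$ and the location of the wells). One should note that $\nabla W$ need only be continuous for this argument, and that $V\in\mC$ is automatically $C^2$ by elliptic regularity (or simply by the ODE itself once $V$ is known to be $C^1$), so $\varphi$ is genuinely $C^2$ and the computation is legitimate.

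The main obstacle — really the only delicate point — is making sure that the maximum of $\varphi$ is genuinely attained at an interior point rather than being only a supremum approached at infinity; this is handled cleanly by the fact that $V(\pm\infty)=\ppm$ forces $\varphi$ to have finite limits, so any value of $\varphi$ strictly exceeding both $\limsup_{\pm\infty}\varphi$ is attained at an interior maximizer. A secondary technical point is that if $V$ is a priori only in $H^1_{loc}$, one first upgrades regularity: $V\in H^1_{loc}$ solving $V''=\nabla W(V)$ with $W\in C^3$ and $V$ bounded on compacts gives $V\in C^2_{loc}$ by bootstrapping, so the pointwise computation of $\varphi''$ is valid. An alternative route, if one prefers to avoid any regularity discussion, is the energy/first-integral argument: for $\nu=0$, \eqref{ODE} has the conserved quantity $\tfrac12\abs{V'}^2 - W(V)$, which equals $0$ by the boundary conditions, giving $\abs{V'}=\sqrt{2W(V)}$ pointwise; but this by itself does not immediately bound $\abs{V}$ without invoking (W3) again, so the convexity argument above is the cleanest and I would present that one.
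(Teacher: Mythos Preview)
Your proof is correct and follows essentially the same approach as the paper: both compute $\tfrac12(|V|^2)'' = |V'|^2 + \nabla W(V)\cdot V$ from the ODE with $\nu=0$, invoke (W3) to make this positive where $|V|$ is large, and then apply a maximum-principle argument using the boundary conditions $V(\pm\infty)=\ppm$. The only cosmetic difference is that the paper shifts to $\varphi=|V|^2-K^2$, derives the global inequality $\varphi''\ge 2c_0\varphi$, and concludes by multiplying by $\varphi_+$ and integrating, whereas you use the pointwise second-derivative test at an interior maximum; both are standard incarnations of the same maximum principle.
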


\begin{proof}
From hypothesis (W3), we may easily obtain the global bound,
\begin{equation}\label{gb}
\nabla W(p)\cdot p \ge c_0|p|^2 - c_1,
\end{equation}
valid for all $p\in\R^2$.  Define $\varphi(x):=|V(x)|^2-K^2$, for any constant
$K^2> \max\{\frac{c_1}{c_0},|\ppm|\}$.  We calculate
\begin{align*}
 \frac12\varphi''(x) &= |V'|^2 + \nabla W(V)\cdot V \\
   &\ge c_0\left(|V|^2 - \frac{c_1}{c_0}\right)  \\
   &\ge c_0 \varphi.
\end{align*}
Since by the choice of $K$ one has $\lim_{|x|\to\infty} \varphi(x)<0$, the positive part $\varphi_+(x)=\max\{\varphi(x),0\}$ has compact support in $\R$.
Multiplying the inequality above by $\varphi_+$ and integrating, we have:
$$  \int_\R \left[ \frac12 (\varphi'_+)^2  + c_0 \varphi_+^2\right] dx \leq 0,
$$
and hence $\varphi(x)\le 0$ on $\R$.
\end{proof}

The third lemma contains the spectral properties of the linearized operator at any $U_0\in\mC_0$,
$$  L\Phi:= \delta^2 H(U_0)\Phi = -\Phi'' + D^2W(U_0)\Phi, $$
with $\Phi=(\varphi_1,\varphi_2)\in H^1(\R;\R^2)$.  Denote by $(U,V)$ the $L^2(\R;\R^2)$ scalar product.

\begin{lemma}\label{spectral}  Assume $W$ satisfies (W1)--(W3).  Then:
\begin{enumerate}
\item[(a)]
 $L$ is a positive semi-definite linear operator, $(L\Phi,\Phi)\ge 0$ for all $\Phi\in H^1(\R;\R^2)$.
\item[(b)] Zero is an eigenvalue of $L$, with eigenfunction $U'_0\in L^2(\R;\R^2)$.
\item[(c)]  The essential spectrum $\sigma_{ess}(L)\subset [\lambda,\infty)$, with $\lambda$ as in (W2).
\end{enumerate}
\end{lemma}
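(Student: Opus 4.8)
The plan is to prove the three spectral properties of $L=\delta^2 H(U_0)$ essentially by invoking standard facts about Schr\"odinger operators together with what we already know about $U_0\in\mathcal{C}_0$. First I would note that since $U_0$ solves the heteroclinic equation $U_0''=\nabla W(U_0)$ with $U_0(\pm\infty)=\ppm$, and since $W\in C^3$ with non-degenerate minima, the matrix-valued potential $Q(y):=D^2W(U_0(y))$ is continuous and bounded, and $Q(y)\to D^2W(\ppm)\ge\lambda I$ as $y\to\pm\infty$. Hence $L=-\frac{d^2}{dy^2}+Q(y)$ is a bounded perturbation of $-\frac{d^2}{dy^2}$ and is self-adjoint on $L^2(\R;\R^2)$ with domain $H^2(\R;\R^2)$; this is the standard setup for the claims.

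For part (a), I would simply recall that $U_0$ is an \emph{unconstrained} minimizer of $H$ over $\mathcal S$ (this is the defining property of $\mathcal C_0$, and existence of such minimizers under (W1)--(W3) is cited from \cite{AF}). Second-order optimality then gives $\delta^2 H(U_0)[\Phi]\ge 0$ for all admissible variations, i.e. for all $\Phi\in H^1(\R;\R^2)$, since such $\Phi$ are exactly the tangent directions that keep the boundary conditions and the $H^1_{loc}$ class; and by density/continuity of the quadratic form this extends to all of $H^1$. So $(L\Phi,\Phi)=\delta^2H(U_0)[\Phi]\ge 0$. For part (b), differentiating the ODE $U_0''=\nabla W(U_0)$ in $y$ yields $U_0'''=D^2W(U_0)U_0'$, i.e. $LU_0'=0$; one must check $U_0'\in L^2$, which follows from the exponential decay of $U_0-\ppm$ at $\pm\infty$ (a consequence of the non-degeneracy (W2) and a standard stable-manifold/Gronwall argument), so $U_0'\ne 0$ is a genuine $L^2$ eigenfunction with eigenvalue $0$. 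For part (c), I would apply Weyl's theorem on the invariance of the essential spectrum under relatively compact perturbations: write $L=L_\infty + (Q(y)-Q_\infty)$ where $L_\infty$ is the operator with the limiting (piecewise-constant or suitably interpolated) potential $Q_\infty$, and $Q(y)-Q_\infty\to 0$ as $|y|\to\infty$, hence is a relatively compact perturbation of $-d^2/dy^2$. The essential spectrum of a constant-coefficient operator $-d^2/dy^2+M$ with $M\ge\lambda I$ symmetric is $[\mu_{\min}(M),\infty)\subset[\lambda,\infty)$, and since $D^2W(\ppm)\ge\lambda I$ by (W2), we conclude $\sigma_{ess}(L)\subset[\lambda,\infty)$.

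The main obstacle, such as it is, is bookkeeping rather than depth: one must be careful that the limiting potentials at $+\infty$ and $-\infty$ may differ (since $D^2W(\ppl)\ne D^2W(\pmi)$ in general), so the "operator at infinity" is really a direct sum of two half-line operators, and one should either phrase Weyl's theorem for this two-ended geometry or decompose $\R$ into $(-\infty,-R)$, $(-R,R)$, $(R,\infty)$ and use a partition of unity (Ismagilov--Mourre-type bracketing). Either way the conclusion is the same: the essential spectrum is $[\min\{\mu_{\min}(D^2W(\ppl)),\mu_{\min}(D^2W(\pmi))\},\infty)\subset[\lambda,\infty)$. I would also remark that combining (a) and (c) immediately gives that $\sigma(L)\cap(-\infty,\lambda)$ consists of finitely many eigenvalues of finite multiplicity in $[0,\lambda)$, and in particular $0$ is at worst a finitely-degenerate isolated point of the spectrum or the bottom of the essential spectrum; the genuinely substantive hypothesis (W5), that $0$ is isolated and \emph{simple}, is what is assumed rather than proved, and is used only in the subsequent Proposition~\ref{simple}.
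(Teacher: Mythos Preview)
Your proposal is correct and follows essentially the same approach as the paper: minimality of $U_0$ for (a), differentiation of the heteroclinic ODE plus exponential decay via the stable manifold theorem for (b), and Weyl's theorem with the limiting constant-coefficient operators at $\pm\infty$ for (c). If anything, you supply more detail than the paper does, particularly in your careful treatment of the two-ended asymptotics in (c).
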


\begin{proof}
Since $U_0$ is a global minimizer of $H$, it follows that $L=\delta^2 H(U_0)\ge 0$.  By translation invariance, $U'_0$ solves the ODE $LU_0'(x)=0$ on $\R$.  By hypothesis (W2) and the Stable/Unstable Manifold Theorem, $U_0'(x)$ has exponential decay to zero as $x\to\pm\infty$, and thus $U_0'\in L^2(\R;\R^2)$, so zero is an eigenvalue.  Finally, the essential spectrum of $L$ is determined by Weyl's Theorem on locally compact perturbations of linear operators (see \cite{HS}).  Indeed, the essential spectrum of $L$ is the union of the spectra of the constant coefficient operators given by the asymptotic behavior of the potential $D^2 W(U_0(x))$ as $x\to\pm\infty$, $L_-\Phi:= -\Phi'' + D^2 W(\pmi)\Phi$ and $L_+\Phi:= -\Phi'' + D^2 W(\ppl)\Phi$.  These are constant coefficient second order ODEs, and the spectrum of each is bounded below by $\lambda$, by (W2).
\end{proof}

We next show that hypothesis (W5) ensures that minimizing heteroclinics in $\mC_0$ are suitably isolated from heteroclinics with near-minimal energy values.

For simplicity (and without loss of generality) in the following we will choose the wells $\ppm$ of $W$ to be symmetrically placed on the $p_1$-axis. This can be achieved
through a rigid motion of the coordinate system.

\begin{lemma}\label{isolemma}  Assume $W$ satisfies (W1)--(W3) and (W5), and let $U_0\in\mC_0$.  Then there exists $\delta_0>0$ so that if $\{V_n\}$ is any sequence in $\mC$ with
\begin{enumerate}
\item[(a)]
$V_n(\cdot)\neq U_0(\cdot -\tau_n)$ for all $\tau_n\in\R$; and
\item[(b)]
$H(V_n)\to H(U_0)=\min_{U\in\mH} H(U)$,
\end{enumerate}
then
\begin{equation}\label{isolated}  \inf\left\{ \left\| U_0-V_n(\cdot-\tau)\right\|_{L^2(\R)} \, : \ \tau\in\R, \ n\in\N\right\} \ge\delta_0.
\end{equation}
\end{lemma}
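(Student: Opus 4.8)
The plan is to argue by contradiction using the compactness afforded by Lemma \ref{convlemma} together with the spectral hypothesis (W5). Suppose no such $\delta_0>0$ exists. Then there is a sequence $\{V_n\}\subset\mathcal{C}$ satisfying (a) and (b) and translations $\tau_n\in\R$ with $\|U_0-V_n(\cdot-\tau_n)\|_{L^2(\R)}\to 0$. Replacing $V_n$ by $V_n(\cdot-\tau_n)$ (which is still a heteroclinic in $\mathcal{C}$, by translation invariance of \eqref{ODE} with $\nu=0$), we may assume $V_n\to U_0$ in $L^2(\R;\R^2)$. The first step is to upgrade this $L^2$ convergence to something strong enough to control the nonlinearity: since $H(V_n)\to H(U_0)$ and $\{V_n\}$ is then a minimizing sequence for $H$ in $\mathcal{S}$, Lemma \ref{convlemma} applies and yields (after a further translation, which the $L^2$ convergence to $U_0$ forces to stay bounded, hence convergent along a subsequence) that $V_n\to U_0$ in $H^1(\R;\R^2)$. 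Combined with the uniform $L^\infty$ bound of Lemma \ref{apriori} and interpolation, one gets $V_n\to U_0$ in $L^\infty(\R;\R^2)$ as well.

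The second step is to extract a nontrivial element of the kernel of $L=\delta^2H(U_0)$ from the difference $\Psi_n:=V_n-U_0$. Both $U_0$ and $V_n$ solve the heteroclinic ODE $U''=\nabla_u W(U)$, so subtracting and Taylor-expanding $\nabla_u W$ about $U_0(y)$ gives
\[
\Psi_n'' = D^2W(U_0)\Psi_n + \rho_n,\qquad \|\rho_n(y)\|\le C\,\|\Psi_n(y)\|^2,
\]
using the $C^3$ regularity of $W$ and the $L^\infty$ bound on $V_n,U_0$. Setting $\hat\Psi_n:=\Psi_n/\|\Psi_n\|_{H^1}$ (note $\Psi_n\not\equiv 0$ by hypothesis (a), since $V_n$ is not a translate of $U_0$), we have $\|\hat\Psi_n\|_{H^1}=1$ and $L\hat\Psi_n = \rho_n/\|\Psi_n\|_{H^1}$, whose $L^2$ norm is bounded by $C\|\Psi_n\|_{L^\infty}\|\hat\Psi_n\|_{L^2}\to 0$. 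Thus $\hat\Psi_n$ is an almost-null sequence for $L$. Because $\sigma_{ess}(L)\subset[\lambda,\infty)$ by Lemma \ref{spectral}(c) and $0$ is isolated and simple by (W5), the spectral projection onto the kernel is well-behaved: writing $\hat\Psi_n = c_n \tilde U_0 + \Psi_n^\perp$ with $\tilde U_0:=U_0'/\|U_0'\|_{L^2}$ spanning $\ker L$ and $\Psi_n^\perp$ in the spectral complement, the inequality $(L\Psi_n^\perp,\Psi_n^\perp)\ge \delta\,\|\Psi_n^\perp\|_{L^2}^2$ for some spectral gap $\delta>0$ (the distance from $0$ to the rest of $\sigma(L)$) forces $\|\Psi_n^\perp\|_{L^2}\to 0$; a parallel elliptic estimate $\|\Psi_n^\perp\|_{H^1}^2 \le C\big((L\Psi_n^\perp,\Psi_n^\perp)+\|\Psi_n^\perp\|_{L^2}^2\big)$ then gives $\|\Psi_n^\perp\|_{H^1}\to 0$, so $|c_n|\to 1$. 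Hence, along a subsequence, $\hat\Psi_n\to \pm\tilde U_0$ in $H^1(\R;\R^2)$.

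The third and decisive step is to derive a contradiction from this convergence $\hat\Psi_n\to\tilde U_0$. The idea is that convergence of the \emph{normalized} difference to the translational mode means $V_n$ is, to leading order, an infinitesimal translate of $U_0$; but since both are genuine solutions, one can then iterate or use a modulation argument to conclude $V_n$ must actually equal a translate of $U_0$ for large $n$, contradicting (a). Concretely, I would fix the translation freedom by imposing an orthogonality condition such as $(V_n-U_0, U_0')=0$ (achievable by choosing the translate of $V_n$ that minimizes $\|V_n(\cdot-\tau)-U_0\|_{L^2}$, whose first-order optimality condition is exactly this orthogonality); with this normalization $\Psi_n\perp U_0'$ in $L^2$, so $c_n=(\hat\Psi_n,\tilde U_0)=0$ for all $n$, contradicting $|c_n|\to 1$. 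This is the cleanest route: the modulation/orthogonality normalization is compatible with the $L^2$-closeness (the minimizing translation exists and is bounded), and it directly negates the conclusion $\hat\Psi_n\to\pm\tilde U_0$.

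The main obstacle I anticipate is the bookkeeping around translations: one must carry out the reduction to $V_n\to U_0$, the application of Lemma \ref{convlemma} (which reintroduces an \emph{a priori} unknown translation), and the modulation normalization in a mutually consistent way, checking at each stage that the translations stay bounded and can be absorbed along a subsequence. A secondary technical point is the quantitative coercivity of $L$ on the spectral complement of its kernel — this requires knowing not just that $0$ is simple but that it is isolated (which is exactly (W5)) \emph{and} that $\inf\big(\sigma(L)\setminus\{0\}\big)>0$; the latter follows by combining $\sigma_{ess}(L)\subset[\lambda,\infty)$ with simplicity/isolation of the eigenvalue $0$ and positive semidefiniteness, but it should be stated carefully. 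Once the coercivity and the translation bookkeeping are in place, the contradiction is immediate.
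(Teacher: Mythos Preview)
Your approach is correct and essentially matches the paper's: both argue by contradiction, Taylor-expand the heteroclinic ODE for the difference $\Phi_n=V_n-U_0$, normalize, and obtain a contradiction from the coercivity of $L=\delta^2H(U_0)$ on $\{U_0'\}^\perp$ guaranteed by (W5) and Lemma~\ref{spectral}. The paper streamlines by imposing the minimizing-translation orthogonality $(\Phi_n,U_0')_{L^2}=0$ at the outset (exactly your ``cleanest route''), normalizes by $\|\Phi_n\|_{L^2}$ rather than $\|\Phi_n\|_{H^1}$, and gets the needed $L^\infty$ control directly from Lemma~\ref{apriori} together with a bootstrap on $\tilde\Phi_n$, so that Lemma~\ref{convlemma} is not invoked.
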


\begin{proof}
This lemma is an adaptation of Proposition 5.1 of \cite{ABCP}.  For completeness we provide some details of how to modify the argument for our setting.
Recall that (without loss of generality) we are assuming
$$   \ppm = (\pm 1, 0).  $$

First, let $V\in \mC$ be geometrically distinct from $U_0$; that is, $V(x-\tau)\neq U_0(x)$ holds for every $\tau\in\R$.  Then, there exists $\sigma\in\R$ which attains the minimum value of
$$  \inf_{\tau\in\R} \| U_0(\cdot) - V(\cdot-\tau)\|_{L^2(\R)} =
          \| U_0(\cdot) - V(\cdot-\sigma)\|_{L^2(\R)},
$$
and moreover this translate $V^\sigma(x):=V(x-\sigma)$ satisfies the orthogonality condition,
\begin{equation}\label{Vs}
    \int_\R V^\sigma(x)\cdot U'_0(x)\, dx =0.
\end{equation}
The existence of a minimizing $\sigma$ follows from the asymptotic conditions on $U_0,V$, and
the relation \eqref{Vs} follows by differentiating $\| U_0(\cdot) - V(\cdot-\tau)\|_{L^2(\R)}$ with respect to $\tau$, noting  that $U_0$ itself satisfies the same orthogonality condition,
$$  \int_\R U_0\cdot U'_0\, dx = \frac12\left(|\ppl|^2 - |\pmi|^2\right) =0, $$
in light of the assumed symmetric positioning of $\pmi$ ad $\ppl$.

Let $\Phi=V^\sigma - U_0\in H^1(\R;\R^2)$. Then,
$( \Phi , U'_0)_{L^2} =0$, i.e., $\Phi\in Z:=\text{span}\,\{U'_0\}^\perp$.  Furthermore, by Lemma~\ref{spectral} and (W5), the linearization $L$ is invertible on the closed subspace $Z$.
In particular, its quadratic form is bounded below away from zero: there exists $c_0>0$ for which
\beq
(\Psi,L\Psi)_{L^2}\ge c_0\|\Psi\|_{L^2}^2\quad\mbox{for all}\;\Psi\in Z.\label{bbl}
\eeq
Assume $\{V_n\}\subset\mC$ with $H(V_n)\to H(U_0)$, where we may also assume that each $V_n$ is translated such that
$$
\| V_n - U_0\|_{L^2(\R)}=\inf_{\tau\in\R}\| V_n(\cdot+\tau)-U_0(\cdot)\|_{L^2(\R)}.
$$
Thus, each $V_n$ satisfies the orthogonality relation $(V_n,U'_0)_{L^2}=0$.
Define
$\Phi_n:= V_n- U_0$, and suppose that $\|\Phi_n\|_{L^2}\to 0$, contrary to the statement of Lemma~\ref{isolemma}.

Next we apply a Taylor's expansion to the function $s\mapsto DW(U_0+s\Phi_n)$:  for each $n$ there exists $s_n\in (0,1)$ such that
\begin{align*}
  0=&-\partial _x^2V_n+DW(V_n)=-\partial_x^2U_0-\partial_x^2\Phi_n+DW(U_0+\Phi_n)=\\ &
         -\partial_x^2U_0+DW(U_0) -\partial_x^2\Phi_n  + D^2W(U_0)\Phi_n + \frac12 \frac{d^2}{ds^2}|_{s=s_n}DW(U_0+s\Phi_n)\\
 &= L\Phi_n + \frac12 D^3 W(U_0+s_n\Phi_n)[\Phi_n,\Phi_n].
\end{align*}
To this end, set $t_n:=\|\Phi_n\|_{L^2}\to 0$ and $\tilde \Phi_n:= \Phi_n/t_n\in Z$.  Then, $\tilde\Phi_n$ solves,
\begin{equation}\label{Phi_eq}    L\tilde\Phi_n =-\frac12 t_n\, D^3W(U_0+s_n\Phi_n)[\tilde\Phi_n,\tilde\Phi_n].
\end{equation}

By Lemma~\ref{apriori}, since $V_n\in\mC$ we have a uniform bounds $\|V_n\|_{L^\infty},\|\Phi_n\|_{L^\infty}\le C_1$ for constant $C_1$ independent of $n$.  Using hypothesis (W1), we may conclude that $D^3 W$ is uniformly bounded on bounded sets, and thus we may obtain the uniform estimate:
\begin{align} \label{Lest}  ( \tilde\Phi_n, L\tilde\Phi_n)_{L^2} &=
         -\frac12  \int_\R \sum_{i,j,k}
                \partial_{ijk}W(U+s_n\Phi_n)\tilde\Phi_{n,i}\tilde\Phi_{n,j}\Phi_{n,k} \\
                \nonumber
         &\le C_2 \|\Phi_n\|_{L^\infty}\|\tilde\Phi_n\|_{L^2}^2 \le C_3.
\end{align}
 From this we conclude the uniform bound on the derivatives,
$$  \int_\R  |\tilde\Phi'_n|^2 dx = ( \tilde\Phi_n, L\tilde\Phi_n)_{L^2} - \int_\R D^2W(U_0)[\tilde\Phi_n,\tilde\Phi_n]\, dx
\le C_3 + C_4\|\tilde\Phi_n\|_{L^2}^2=C_3+C_4.
$$
By the Sobolev embedding we conclude that $\|\tilde\Phi_n\|_{L^\infty}\le C_5$ is uniformly bounded, and we may improve the estimate \eqref{Lest},
\begin{align*}
  ( \tilde\Phi_n, L\tilde\Phi_n)_{L^2}
  &= -\frac12 t_n \int_\R \sum_{i,j,k}
                \partial_{ijk}W(U_0+s_n\Phi_n)\tilde\Phi_{n,i}\tilde\Phi_{n,j}\tilde\Phi_{n,k}  \\
    &\le C t_n\|\tilde\Phi_n\|_{L^2}^2\|\tilde\Phi_n\|_{L^\infty} \to 0.
\end{align*}
Since $\tilde\Phi_n\in Z$ and $\|\tilde\Phi_n\|_{L^2}=1$ for all $n$, we arrive at a contradiction, as the quadratic form $( \Phi, L\Phi)_{L^2}$ is strictly positive definite for $\Phi\in Z$, cf. \eqref{bbl}.  In conclusion, \eqref{isolated} must hold.
\end{proof}

\begin{cor}
Assume (W1)--(W3) and (W5).  Then there are only finitely many $V\in\mC_0$ which are geometrically distinct (not translations of each other).
\end{cor}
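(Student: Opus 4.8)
\proof
The plan is to argue by contradiction, combining the $H^1$--compactness of minimizing sequences from Lemma~\ref{convlemma} with the isolation estimate of Lemma~\ref{isolemma}. Suppose that $\mC_0$ contained infinitely many geometrically distinct elements, and pick representatives $V_1,V_2,\ldots\in\mC_0$, no two of which are translates of one another. Since every member of $\mC_0$ is a global minimizer of $H$ over $\mS$, the sequence $\{V_n\}$ has constant energy equal to $\min_{\mS}H$ and is therefore, trivially, a minimizing sequence for $H$ in $\mS$.

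First I would invoke Lemma~\ref{convlemma}: along a subsequence (not relabeled) there exist translations $\tau_n\in\R$ and a heteroclinic $U_*\in\mC_0$ with $\|V_n(\cdot-\tau_n)-U_*\|_{H^1(\R)}\to0$, hence also $\|V_n(\cdot-\tau_n)-U_*\|_{L^2(\R)}\to0$. The essential point is that the limit $U_*$ is itself a minimizing heteroclinic, so Lemma~\ref{isolemma} is available with its distinguished heteroclinic taken to be this $U_*$.

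Next I would apply Lemma~\ref{isolemma} with $U_0=U_*$. At most one index can have $V_n$ equal to a translate of $U_*$, since the $V_n$ are pairwise geometrically distinct; discarding that index if present, the remaining $V_n$ satisfy hypotheses (a) and (b) of Lemma~\ref{isolemma} — each is geometrically distinct from $U_*$, and $H(V_n)=H(U_*)=\min_{\mS}H$. Lemma~\ref{isolemma} then furnishes a $\delta_0>0$ with
\[
\inf\left\{\|U_*-V_n(\cdot-\tau)\|_{L^2(\R)}:\ \tau\in\R,\ n\in\N\right\}\ge\delta_0.
\]
But for $n$ large the choice $\tau=\tau_n$ gives $\|U_*-V_n(\cdot-\tau_n)\|_{L^2(\R)}<\delta_0$ by the $L^2$ convergence just established, a contradiction. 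Hence $\mC_0$ has only finitely many geometrically distinct members.

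I expect no serious obstacle: the corollary is a short deduction from the two preceding lemmas. The one point that must be handled with care — and the reason Lemma~\ref{isolemma} was stated for an arbitrary $U_0\in\mC_0$ rather than for a single fixed heteroclinic — is that the isolation radius $\delta_0$ must be applied around the \emph{limit} $U_*$ produced by Lemma~\ref{convlemma}, not around a heteroclinic chosen in advance; once this is observed, the compactness and the isolation estimate close the argument immediately.
\qed
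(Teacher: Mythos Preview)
Your argument is correct and is exactly the approach the paper intends: the paper's proof consists of the single sentence ``The corollary follows from Lemma~\ref{isolemma}, together with the compactness provided by Lemma~\ref{convlemma},'' and you have carefully fleshed out precisely this contradiction argument. Your observation that at most one $V_n$ can be a translate of the limit $U_*$, and that the isolation lemma must be applied around this limit, is the only subtlety and you handle it correctly.
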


The corollary follows from Lemma \ref{isolemma}, together with the compactness provided by Lemma \ref{convlemma}.

We are now ready to prove Proposition \ref{simple} which will complete the proof of Theorem \ref{BIG} as well:

\smallskip

\begin{proof}[Proof of Proposition \ref{simple}]
Let $\e_0>0$ be as in Theorem~\ref{nobubbles}, $|\e|<\e_0$, and $U_*\in\mC_0$.
We argue by contradiction, and assume that there exist sequences $\{\e_n\}\to 0$ and $\{U_n\}$, minimizers of $H$ in $\mG_{A_n}$, with
\begin{equation}\label{Pvalues}
A_n:=\mathcal{P}(U_n)=\mathcal{P}(U_*)+\e_n,
\end{equation}
 whose wave speed $\nu_n=0$ for every $n\in\N$.
That is, $U_n\in\mC$ for each $n$.  We note that by \eqref{Pvalues} and the translation invariance of $\mathcal{P}$, each $U_n$ and $U_*$ are geometrically distinct elements of $\mC$, that is $U_n(\cdot-\tau)\neq U_*$ for any $\tau\in\R$.

The construction in the proof of Theorem~\ref{nobubbles} provides an upper bound on the energies,
$$   H(V_n) = \sqrt{2}E(V_n) \le \sqrt{2}\left[ E(U_*) + E(\beta_{\e_n})\right]
                          \le \sqrt{2}\left[ E(U_*) + c_1(\pi+2){\frac{2}{\pi}}\e_n\right]
                           = H(U_*) + c_2\e_n,  $$
where $\beta_{\e_n}$ is the bubble whose energy is estimated in \eqref{upen}.
Thus, $\{U_n\}_{n\in\N}$ is a sequence of heteroclinic connections with
$H(U_n)\to H(U_*)$, and thus it is a minimizing sequence (without area constraint) for $H$ in $\mS$.
Applying Lemma~\ref{convlemma}, there exists $U_0\in \mC_0$ and translations $\tau_n\in\R$, so that (along a subsequence)
\begin{equation}\label{contra}
U_n(\cdot-\tau_n)\to U_0\quad \text{in $H^1(\R;\R^2)$.}
\end{equation}
But $U_n$ satisfies the hypotheses of Lemma~\ref{isolemma}, and this contradicts \eqref{isolated}, so there must exist $\e_1$ as claimed in the statement of the Proposition.
\end{proof}

The simplicity of the ground state eigenvalue of $\delta^2H(U_0)$ is generic, but it is possible to identify a class of specific examples for which it must hold.  For instance, we may impose an additional condition on $W$:
\begin{enumerate}
\item[(W6)]  $\pmi$ and $\ppl$ lie on the $p_1$-axis and $W(p_1,p_2)\ge W(p_1,0)$, for all $p=(p_1,p_2)\in\R^2$.
\end{enumerate}
A simple example which satisfies all of the conditions (W1)--(W6) may be constructed in the form $W(p)= w(p_1) + p_2^2$, with $w(p_1)$ an even $C^2$ function with $w(p_1)=(p_1-1)^2$ for $p_1\ge\frac12$, and $w(p_1)>\frac14$ for $|p_1|<\frac12$.

\begin{prop}\label{simple2}
If $W$ satisfies (W1)--(W3) and (W6), and $U_0$ is any  minimizer of $H$ in $\mathcal{S}$, then zero is a simple eigenvalue of $\delta^2H(U_0)$.
\end{prop}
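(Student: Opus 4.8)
To prove Proposition~\ref{simple2} the plan is to use hypothesis (W6) twice: first to force any minimizer $U_0$ of $H$ in $\mS$ to have identically vanishing second component, and then to observe that the linearized operator $L=\delta^2H(U_0)$ consequently decouples into two scalar Schr\"odinger operators on $\R$, each of which is handled by an elementary argument.

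First I would reduce to the $p_1$-axis. Writing $U_0=(u_1,u_2)$, the pair $(u_1,0)$ still lies in $\mS$, so minimality gives $H(U_0)\leq H(u_1,0)$; on the other hand (W6) says $W(u_1,u_2)\geq W(u_1,0)$ pointwise, so
\[
H(U_0)=\int_{-\infty}^\infty\Big(\tfrac12|u_1'|^2+\tfrac12|u_2'|^2+W(u_1,u_2)\Big)\,dy\ \geq\ H(u_1,0)+\tfrac12\int_{-\infty}^\infty|u_2'|^2\,dy.
\]
Together these force $\int_{-\infty}^\infty|u_2'|^2\,dy=0$, and since the second coordinate of $\ppm$ vanishes (the wells lie on the $p_1$-axis), the asymptotic condition $u_2(\pm\infty)=0$ gives $u_2\equiv0$; write $U_0=(q,0)$. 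Next I would exploit (W6) at the level of second derivatives: for each $p_1$ the map $p_2\mapsto W(p_1,p_2)$ is minimized at $p_2=0$, so $\partial_{p_2}W(p_1,0)\equiv0$ and $\partial^2_{p_2p_2}W(p_1,0)\geq0$; differentiating the identity $\partial_{p_2}W(\,\cdot\,,0)\equiv0$ in $p_1$ yields $\partial^2_{p_1p_2}W(p_1,0)\equiv0$. Hence $D^2W(U_0(y))=\operatorname{diag}\big(w''(q(y)),\,a(q(y))\big)$ with $w(p_1):=W(p_1,0)$ and $a:=\partial^2_{p_2p_2}W(\,\cdot\,,0)\geq0$, so $L=L_1\oplus L_2$ where $L_1=-\partial_y^2+w''(q)$ acts on first components and $L_2=-\partial_y^2+a(q)$ on second components. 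Note also $U_0'=(q',0)$, and, exactly as in Lemma~\ref{spectral}, $q'$ decays exponentially by (W2) and the stable/unstable manifold theorem, so $q'\in L^2(\R)$.

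I would then treat the two blocks separately. For $L_2$: if $L_2\varphi_2=0$ with $\varphi_2\in H^1(\R)$, then $0=\int_{-\infty}^\infty\big(|\varphi_2'|^2+a(q)\varphi_2^2\big)\,dy\geq\int_{-\infty}^\infty|\varphi_2'|^2\,dy$, so $\varphi_2$ is constant, hence $\varphi_2\equiv0$ by square-integrability, giving $\ker L_2=\{0\}$. For $L_1$: translation invariance gives $L_1q'=0$, so $0$ is a null eigenvalue; since $U_0$ minimizes $H$ we have $L\geq0$, hence $L_1\geq0$, so $0=\min\operatorname{spec}(L_1)$; and by Lemma~\ref{spectral}(c) the essential spectrum of $L$, a fortiori of $L_1$, lies in $[\lambda,\infty)$, so $0$ is an isolated eigenvalue sitting strictly below the essential spectrum. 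By the classical simplicity of the bottom eigenvalue of a one-dimensional Schr\"odinger operator, $\ker L_1=\operatorname{span}\{q'\}$. Combining the blocks, $\ker L=\ker L_1\oplus\ker L_2=\operatorname{span}\{(q',0)\}$ is one-dimensional, while $0$ is isolated in $\operatorname{spec}(L)$ by Lemma~\ref{spectral}(c); hence zero is a simple (isolated) eigenvalue of $\delta^2H(U_0)$. The only step relying on something standard but not completely elementary is this last one — simplicity of the ground-state eigenvalue of a one-dimensional Schr\"odinger operator — which is what rules out an extra kernel direction of the form $(\varphi_1,0)$ with $\varphi_1$ not proportional to $q'$.
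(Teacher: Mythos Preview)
Your proof is correct and follows essentially the same approach as the paper's own proof: reduce to $U_0=(q,0)$ via (W6), use (W6) again to diagonalize $D^2W(U_0)$ and decouple $L=L_1\oplus L_2$, kill the kernel of $L_2$ via nonnegativity of $\partial^2_{p_2p_2}W(\cdot,0)$, and invoke simplicity of the ground state of the one-dimensional Schr\"odinger operator $L_1$. Your justification of the off-diagonal vanishing $\partial^2_{p_1p_2}W(\cdot,0)\equiv 0$ (by differentiating the identity $\partial_{p_2}W(\cdot,0)\equiv 0$ in $p_1$) is in fact slightly cleaner than the paper's, which extracts it from the second Euler--Lagrange equation.
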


\begin{proof}
For any $U=(u_1,u_2)\in \YY$, $H(U)\ge H(u_1,0)$, and so the minimizer must have the form $U_0=(u_0,0)$. By (W6), $W_{p_2}(U_0(x))=0$ and $W_{p_2p_2}(U_0(x))\ge 0$ for all $x\in\R$.  Moreover, the Euler-Lagrange equations are
$$   -u''_0(x) + W_{p_1}(u_0(x),0) =0,  \quad  W_{p_2}(u_0(x),0) =0.  $$
From the second equation, $W_{p_2 p_1}(u_0(x),0)=0$ (almost everywhere) along the heteroclinic.

From Lemma~\ref{spectral}, the second variation of the energy about $U_0$ is non-negative,
$\delta^2 H(U_0)[\Phi,\Phi]\ge 0$ for all $\Phi\in H^1(\mathbb{R},\mathbb{R}^2)$.  In fact, by the choice of potential,
\begin{align*}
\delta^2 H(U_0)[\Phi,\Phi] &= \int\left[ |\Phi'|^2 + \Phi^t D^2 W(U_0)\Phi \right] dx \\
&= \int \left[ (\varphi'_1)^2 + W_{p_1 p_1}(u_0,0) \varphi_1^2\right] dx +
    \int \left[ (\varphi'_2)^2 + W_{p_2 p_2}(u_0,0) \varphi_2^2\right] dx \\
    &=: (\varphi_1, L_1\varphi_1) + (\varphi_2, L_2\varphi_2),
\end{align*}
defining the linear operators with respect to the $L^2$ scalar product.  Now, both $L_1\ge 0$ and $L_2\ge 0$, as we may consider variations of $U_0$ with only one coordinate nonzero.  By Lemma~\ref{spectral} the essential spectra of $L_1, L_2$ are bounded away from zero.
For $L_1$, $\varphi_1=u'_0$ is an $L^2$ eigenfunction.  Moreover, it is the lowest eigenvalue and thus has a sign $u'_0\ge 0$, and is therefore simple.  For $L_2$, $W_{p_2 p_2}(U_0(x))\ge 0$, so
$(\varphi_2, L_2\varphi_2)=0$ if and only if $\varphi_2$ is constant, hence zero, and thus the operator $L_2$ is invertible.  Since (by Lemma~\ref{spectral}) the essential spectrum is bounded away from zero (and zero cannot be an eigenvalue), there exists a constant $\mu>0$ with $\sigma(L_2)\subset [\mu,\infty)$.
In conclusion, $\delta^2 H(U_0)\ge 0$ and has an isolated simple zero eigenvalue, with eigenfunction $U'_0$.
\end{proof}

\subsection{Non-existence of traveling waves with large wave speeds}
Finally, we investigate the question of whether there is some restriction on the possible wave speeds $\nu$ for traveling wave solutions of \eqref{ODE}.  It is typical for traveling waves of Hamiltonian systems to have some speed limit:  for example, the cubic defocussing Nonlinear Schr\"odinger (NLS) equation has nonconstant traveling waves (``dark solitons'') with speeds $|\nu|
\le\sqrt{2}$.  (See \cite{Chiron} for a discussion of traveling waves in the 1D NLS equation.)
As in the NLS case, we expect traveling waves to exist only for $\nu$ in an interval around zero, and that the critical speed should be determined by the linearized equations at the two wells of $W$.  A complete determination of the possible wave speeds, as well as the relationships between the speed $\nu$ (the Lagrange multiplier in the variational problem \eqref{mu1}) and the area constraint $\mP(U)$, remain interesting open questions.

First, we consider the case that for $|p-\mathbf{p}_\pm|<r_\pm$ with $r_\pm>0$, $W:\R^2\to\R$ is given by a quadratic of the form
\beq
W(p)=(p-{\bf p}_\pm)^T\,H_\pm (p-{\bf p}_\pm),\label{SL1}
\eeq
where $H_\pm$ are a pair constant, real, symmetric, positive definite $2\times 2$ matrices.
Finding a necessary condition for the speed $\nu$ of a traveling wave is a local computation, and so we restrict our attention to one well, which we place at the origin, $\mathbf{p}_+=0$.
We denote by $\lm_1$ and $\lm_2$ the two positive eigenvalues of the matrix $H_+$ and express all points in $\R^2$ and all curves in a neighborhood of that well
with respect to the orthonormal basis $\{v_1,v_2\}$ of eigenvectors of $H_+$, so that
we may assume $W$ takes the form
\[
W(p)=\lm_1 p_1^2+\lm_2 p_2^2, \qquad |p|\le r.
\]
\begin{prop}\label{SLprop1}
Under the above hypothesis on $W$, if there exists a heteroclinic traveling wave solution to \eqref{ODE}, then $\nu^2\le 2(\lambda_1+\lambda_2)^2$.
\end{prop}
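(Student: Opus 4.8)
The plan is to localise near the well sitting at the origin, where by \eqref{SL1} the traveling-wave equation \eqref{ODE} is an \emph{exactly} constant-coefficient linear system, and to show that the presence of a heteroclinic forces that linear system to carry a nontrivial solution decaying at $+\infty$; the condition for such a solution to exist turns out to be precisely the claimed bound on $\nu^2$.

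First I would observe that a heteroclinic $U$ satisfies $U(y)\to\ppl=0$ as $y\to+\infty$, so there is a $y_0$ with $|U(y)|<r$ for $y\ge y_0$, and on $[y_0,\infty)$, using $\nabla W(p)=2H_+p$, the function $U$ solves
\[
U''+\nu\J U'-2H_+U=0,\qquad H_+=\mathrm{diag}(\lm_1,\lm_2).
\]
This restriction of $U$ cannot be identically zero: otherwise $U(y_0)=U'(y_0)=0$, and uniqueness for the initial value problem for \eqref{ODE} (legitimate since $\nabla W$ is locally Lipschitz) would give $U\equiv 0$, contradicting $U(-\infty)=\pmi\neq 0$. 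So $U|_{[y_0,\infty)}$ is a nontrivial solution of the linear system tending to $0$ at $+\infty$. Rewriting the system in first-order form for $(U,U')$ with constant coefficient matrix $\mathcal A$, the standard fact that a solution of an autonomous linear system converging to an equilibrium lies in the stable subspace forces $\mathcal A$ to have at least one eigenvalue with strictly negative real part. Since the eigenvalues of $\mathcal A$ are exactly the $\mu$ for which $e^{\mu y}v$ solves the system for some $v\neq 0$, i.e. the roots of $\det(\mu^2 I+\nu\mu\J-2H_+)=0$, i.e. of $(\mu^2-2\lm_1)(\mu^2-2\lm_2)+\nu^2\mu^2=0$, we conclude that the biquadratic
\[
P(\mu):=\mu^4+\bigl(\nu^2-2(\lm_1+\lm_2)\bigr)\mu^2+4\lm_1\lm_2
\]
has a root with strictly negative real part.

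Next I would analyse when $P$ has such a root. Since $P$ is even with real coefficients its root set is invariant under $\mu\mapsto-\mu$ and under conjugation, and $P(0)=4\lm_1\lm_2>0$, so no root is zero; hence $P$ has a root with negative real part iff it has a root off the imaginary axis, and with $s=\mu^2$ this is equivalent to the quadratic $Q(s):=s^2+(\nu^2-2(\lm_1+\lm_2))s+4\lm_1\lm_2$ having a root that is not a negative real number. As the product of the roots of $Q$ equals $4\lm_1\lm_2>0$, the two roots of $Q$ are either both negative real, both positive real, or a non-real conjugate pair; thus \emph{$P$ has no root with negative real part precisely when $Q$ has two negative real roots}. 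To finish, I would argue by contraposition: suppose $\nu^2>2(\lambda_1+\lambda_2)^2=2(\lm_1+\lm_2)+4\lambda_1\lambda_2$. Then the sum of the roots of $Q$ is $2(\lm_1+\lm_2)-\nu^2<-4\lambda_1\lambda_2<0$, while the discriminant of $Q$ factors as
\[
\bigl(\nu^2-2(\lm_1+\lm_2)\bigr)^2-16\lm_1\lm_2=\bigl(\nu^2-2(\lambda_1+\lambda_2)^2\bigr)\bigl(\nu^2-2(\lambda_1-\lambda_2)^2\bigr)>0,
\]
both factors being positive (using $(\lambda_1+\lambda_2)^2\ge(\lambda_1-\lambda_2)^2$). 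Hence $Q$ has two real roots of negative sum and positive product, so both are negative; by the previous step $P$ then has no root with negative real part, contradicting the first part. Therefore a heteroclinic can exist only when $\nu^2\le 2(\lambda_1+\lambda_2)^2$.

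The single step that deserves care is the dynamical input in the first part — that a solution of an autonomous linear system converging to the equilibrium must lie in its stable subspace, so that contributions from the centre subspace (on which solutions stay bounded away from zero or grow only polynomially) and from the unstable subspace (on which they blow up) are excluded. This is especially clean here because the system is \emph{genuinely} linear in a neighbourhood of the well, so the local stable manifold is literally the stable subspace and no invariant-manifold theory is needed; everything else is an elementary discussion of a biquadratic. I would also remark in passing that the argument in fact yields the strict inequality $\nu^2<2(\lambda_1+\lambda_2)^2$, and that running it at the other well produces a companion bound involving the eigenvalues of $H_-$.
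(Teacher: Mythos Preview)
Your proof is correct and follows essentially the same approach as the paper: localize near the well where the system is exactly linear, reduce to the first-order system, compute the same biquadratic characteristic polynomial $\mu^4+(\nu^2-2(\lm_1+\lm_2))\mu^2+4\lm_1\lm_2$, and show that when $\nu^2>2(\lambda_1+\lambda_2)^2$ all roots are purely imaginary so no nontrivial decaying solution exists. The only differences are organizational---you argue by contraposition and are a bit more explicit about why the restriction of $U$ is nontrivial and must lie in the stable subspace, whereas the paper runs through three parameter regimes---but the substance is the same.
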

\begin{proof}
A heteroclinic traveling wave solution to \eqref{ODE} must remain inside the ball of radius $r_+$ centered at $\ppl$ for a semi-infinite time interval, which we may take to be $x\in (0,\infty)$.  Thus, for $x>0$, we have a solution of
\beq
-\nu \J U'=U''-2H_+ U\quad\mbox{for}\; \lim_{x\to\infty}U(x)=0.\label{SL2}
\eeq
This is a constant-coefficient linear system, which may be solved explicitly.  By converting \eqref{SL2} for $U=(u_1,u_2)$ to an equivalent first-order system of four equations, $Z'=MZ$,
$$
Z=\begin{bmatrix}  z_1 \\ z_2 \\z_3 \\ z_4
\end{bmatrix}=\begin{bmatrix} u_1-\ppl^{(1)} \\ u_2-\ppl^{(2)} \\ u'_1 \\ u'_2 \end{bmatrix}, \qquad
M= \begin{bmatrix}  0 & 0& 1 & 0 \\ 0 & 0 & 0 & 1
               \\ 2\lm_1 & 0 & 0 & -\nu \\  0 & 2\lm_2 & \nu & 0
\end{bmatrix},
$$
any solution is a linear combination of vectors of the form $Z(x)=e^{\mu x} Z_0$, with $\mu$ an eigenvalue of $M$ with eigenvector $Z_0$.  A simple calculation reveals that
the eigenvalues $\mu$ of $M$ satisfy
$$  \mu^2 = \frac12\left[ -\beta \pm \sqrt{\beta^2 -16\lm_1\lm_2}\right], \qquad
\beta:= \nu^2 - 2(\lm_1 + \lm_2),
$$
which leads us to the following parameter regimes in $\nu$:
\begin{enumerate}
\item[(1)] $\boxed{0\le \nu^2 \le 2(\lambda_1-\lambda_2)^2}$.  Then,
$0<4\lambda_1\lambda_2 \le 2(\lm_1+\lm_2)-\nu^2$, and hence $\beta<0$ and $\beta^2-16\lm_1\lm_2>0$.  Thus all eigenvalues $\mu$ are real and nonzero, and at least one is negative.  Traveling waves can exist for such $\nu$, and if they exist they do not spiral.

\item[(2)]  $\boxed{2(\lambda_1-\lambda_2)^2 < \nu^2 < 2(\lambda_1+\lambda_2)^2}$.
In the case of strict inequality, $\beta^2<16\lm_1\lm_2$, and thus $\mu^2=\frac12[\beta\pm i\gamma]$ has a nonzero imaginary part $\gamma>0$.  For such $\nu$, we have two pairs of complex conjugate eigenvalues $\mu$, two of which must have negative real part, and thus correspond to exponentially decaying spiral trajectories.  In this regime, traveling waves can exist.

\item[(3)] $\boxed{\nu^2\ge 2(\lambda_1+\lambda_2)^2}$.  For these $\nu$, we have $\beta\ge 4\lambda_1\lambda_2>0$, and hence $\mu^2<0$ yields purely imaginary eigenvalues.  When the inequality is strict, this would imply that while the traveling wave resides inside the ball of radius $r$ around the well the solutions are purely oscillatory, which contradicts the condition $U(x)\to 0$ as $x\to\infty$.  In the case of equality, we have a pair of pure imaginary, multiplicity-two eigenvalues, and again there are no solutions which decay as $x\to\infty$. Thus, there can be no heteroclinic traveling wave solutions for $\nu$ which are this large.
\end{enumerate}
\end{proof}

\medskip

When the potential $W(p)$ is not exactly a quadratic we nevertheless expect that the permissible wave speeds will be determined by the quadratic part of $W$, as in Proposition~\ref{SLprop1}.
However, in the parameter regime $\nu^2 >2(\lambda_1+\lambda_2)^2$ the linearized traveling wave equations produce a center around the well, corresponding to distinct purely imaginary eigenvalues of the matrix $M$.  As is well known (see\cite{CL}) centers are not structurally stable, and nonlinear perturbations of a center can produce spirals.  One approach to rule out solutions which decay to the wells as $x\to\pm\infty$ is to require the nonlinear perturbation of the quadratic in $W$ to be very small.

\begin{prop}\label{SLprop2}
Assume that in neighborhoods $B_{\rho_\pm}(\mathbf{p}_\pm)$ of the wells $\mathbf{p}_\pm$, we have in local coordinates centered at ${\bf p}_\pm$:
\begin{equation}\label{SL10}
W(p) = \lambda_{1,\pm}^2 p_1^2 + \lambda_{2,\pm}^2 p_2^2 + G(p),
\end{equation}
where $\nabla_p G(p)=g(p)$ with $|g(p)|\le C|p|^q$, and $q\ge 3$.

Then, if $\nu^2 > \min\{2(\lambda_1^+ +\lambda_2^+)^2, 2(\lambda_1^- +\lambda_2^-)^2\}$, then there is no heteroclinic traveling wave with finite energy $H(U)<\infty$ and speed $\nu$.
\end{prop}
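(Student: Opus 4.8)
The plan is to localize near whichever of the two wells realizes the minimum on the right–hand side of the hypothesis. After a rigid motion we may place that well at the origin and call it $\ppl$, so that in a neighborhood $B_{\rho_+}(0)$ we have $W(p)=\lambda_1^2p_1^2+\lambda_2^2p_2^2+G(p)$ with $\nabla G=g$, $|g(p)|\le C|p|^q$, $q\ge 3$, and (the content of the hypothesis) $\nu^2>2(\lambda_1+\lambda_2)^2$ \emph{strictly}. If instead the minimum is attained at $\pmi$, one runs the identical argument on a left half-line $(-\infty,x_0]$. Suppose, for contradiction, that there is a finite-energy heteroclinic traveling wave $U$ with speed $\nu$. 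Since $U(x)\to 0$ as $x\to+\infty$, there is $x_0$ with $U(x)\in B_{\rho_+}(0)$ for all $x\ge x_0$, and there $U$ solves $U''=2H_+U+g(U)-\nu\J U'$ with $H_+=\mathrm{diag}(\lambda_1^2,\lambda_2^2)$. Passing to the first-order form $Z:=(U,U')\in\R^4$ this becomes $Z'=MZ+\mathcal N(Z)$, where $M$ is exactly the matrix appearing in the proof of Proposition~\ref{SLprop1} and $\mathcal N(Z):=(0,g(U))$ obeys $|\mathcal N(Z)|\le C|Z|^q$.

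Next I would exploit the linear structure. Because $\nu^2>2(\lambda_1+\lambda_2)^2$ strictly, the eigenvalue computation in the proof of Proposition~\ref{SLprop1} (its case (3)) shows that $M$ has two \emph{distinct} pairs of purely imaginary eigenvalues $\pm i\omega_1,\pm i\omega_2$; in particular $M$ is semisimple with spectrum on the imaginary axis, so there is a symmetric positive-definite $4\times 4$ matrix $P$ with $M^TP+PM=0$ (conjugate $M$ to a block-diagonal rotation generator and pull back the Euclidean form). Set $Q(Z):=Z^TPZ$, so that $c|Z|^2\le Q(Z)\le C|Z|^2$. Along the solution one computes $\frac{d}{dx}Q(Z)=Z^T(M^TP+PM)Z+2Z^TP\mathcal N(Z)=2Z^TP\mathcal N(Z)$, whence $\big|\frac{d}{dx}Q(Z(x))\big|\le C'|Z(x)|^{q+1}$ and therefore $\big|\frac{d}{dx}\log Q(Z(x))\big|\le C''|Z(x)|^{q-1}$ for $x\ge x_0$. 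This is legitimate because $Q(Z(x))>0$ for every $x$: if $Q(Z(x_1))=0$ then $Z(x_1)=0$, and by uniqueness for the ODE $Z\equiv 0$, contradicting that $U$ joins two distinct wells.

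The final step turns this pointwise bound into integrability. Finite energy gives $\int_{x_0}^\infty|U'|^2\,dx<\infty$, and shrinking $\rho_+$ so that $W(p)\ge\frac12\min(\lambda_1^2,\lambda_2^2)|p|^2$ on $B_{\rho_+}(0)$ (legitimate since $G$ is of order $|p|^{q+1}\ge|p|^4$) also gives $\int_{x_0}^\infty|U|^2\,dx<\infty$; hence $Z\in L^2([x_0,\infty);\R^4)$. Since $|Z|\le\delta$ on $[x_0,\infty)$ and $q-1\ge 2$, we have $|Z|^{q-1}\le\delta^{q-3}|Z|^2$, so $\int_{x_0}^\infty|Z|^{q-1}\,dx<\infty$ and consequently $\int_{x_0}^\infty\big|\frac{d}{dx}\log Q(Z)\big|\,dx<\infty$. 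Thus $\log Q(Z(x))$ has a finite limit as $x\to+\infty$, i.e. $Q(Z(x))$ stays bounded away from $0$. But $U(x)\to 0$ and $U'(x)\to 0$ — the latter either from the conserved quantity $\tfrac12|U'|^2=W(U)\to 0$ or from $U'\in L^2$ with $U''$ bounded — so $Z(x)\to 0$ and $Q(Z(x))\to 0$, a contradiction. Hence no such traveling wave exists.

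The main obstacle is precisely that a center (purely imaginary eigenvalues of $M$) is not structurally stable, so one cannot argue directly from the linearized flow as in Proposition~\ref{SLprop1}: a priori, the higher-order term could conspire to turn the center into a spiral sink down which $U$ descends to the well. The resolution above is that this cannot happen because the $L^2$ integrability of $Z$ forced by finite energy, together with the threshold $q\ge 3$, makes the logarithmic variation of the linear conserved form $Q$ integrable along the orbit. The bookkeeping that still needs checking is routine: the degenerate boundary case $\nu^2=2(\lambda_1+\lambda_2)^2$, where $M$ need not be semisimple and $P$ may fail to exist, is excluded by the strict inequality in the hypothesis, and the case where the minimum is attained at $\pmi$ rather than $\ppl$ is handled by the mirror-image argument on $(-\infty,x_0]$.
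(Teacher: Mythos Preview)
Your argument is correct and takes a genuinely different route from the paper's. The paper rewrites the nonlinear term as $g(U)=h(U)U$ with $|h(U)|\le C|U|^{q-1}$, so that near the well the first-order system takes the form $Z'=(M-H(x))Z$ with $\int^\infty |H(x)|\,dx<\infty$ (this integrability is exactly where $q\ge 3$ and $U\in L^2$ enter, just as in your proof). It then invokes Levinson's Theorem to produce a fundamental system $Z_j(x)\sim e^{\mu_j x}\xi_j$; since the $\mu_j$ are purely imaginary, none of the $Z_j$ decay, forcing the heteroclinic to be trivial. Your approach instead exploits semisimplicity of $M$ to build a positive-definite quadratic invariant $Q(Z)=Z^TPZ$ for the \emph{linear} flow, and then shows directly via a Gronwall/logarithmic-derivative estimate that the nonlinear perturbation cannot drive $Q$ to zero. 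The paper's method yields sharper asymptotic information about the full solution space (at the cost of citing a nontrivial classical theorem), while yours is self-contained and more transparently isolates the mechanism: the conserved quadratic form of a center cannot be destroyed by an $L^1$-in-$x$ perturbation. Both arguments use the strict inequality $\nu^2>2(\lambda_1+\lambda_2)^2$ at the chosen well in the same way, to guarantee distinct (hence semisimple) imaginary eigenvalues of $M$.
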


\begin{rmrk} \label{SLrem}   \rm
The hypothesis $g(p)\le C|p|^q$ can be integrated to obtain a corresponding bound on $G(p)$, with $\nabla_p G(p)=g(p)$ and $G(0)=0$.  Indeed,
$$  |G(p)|= \left|\int_0^1 \frac{d}{ dt} G(tp)\, dt\right|
      \le \int_0^1 |g(tp)|\, |p|\, dt \le C|p|^{q+1}\int_0^1 t^q\, dt = \frac{C}{ q+1} |p|^{q+1}.
 $$
Thus, the residual term in the potential must be of at least fourth order in $p$ for Proposition~\ref{SLprop2} to apply.
\end{rmrk}

\begin{proof}
Assume there is a solution $U$ of \eqref{ODE} of finite energy $H(U)<\infty$.  We restrict our attention to the neighborhoods of the wells; take $\mathbf{p}_+=0$, and assume $U(x)\in B_{\rho_+}(0)$ for all $x\in [x_0,\infty)$.  Without loss, we assume $x_0=0$.  By the boundedness of the energy and the hypothesis on $W$, this implies the $H^1([0,\infty))$ norm of $U$ is finite.

Note that we can represent the nonlinear term $g(U)$ in the form,
$$ g(U) = \frac{1}{ |U|^2} \begin{bmatrix}  [U\cdot g(U)] & [-U^\perp\cdot g(U)] \\
                                           [U^\perp\cdot g(U)] &  [U\cdot g(U)]
                 \end{bmatrix}\begin{pmatrix} u_1 \\ u_2\end{pmatrix}
                 =: h(U)U,
$$
with two-by-two real matrix $h(U)$, which satisfies $|h(U)|\le C |U|^{q-1}$.
Hence,
\begin{equation}\label{SL12}
\int_1^\infty |h(U)|\, dx \le C\int_1^\infty |U(x)|^{q-1}\, dx
     \le C \rho_+^{q-3} \int_1^\infty |U(x)|^2\, dx <\infty.
\end{equation}

Using the notation from Proposition~\ref{SLprop1} we write \eqref{ODE} as a first order system, of the form,
\begin{equation}\label{SL13}   Z'(x) = (M - H(x))\, Z(x), \qquad H(x):= C=\left[
\begin{array}{c|c}
0  & 0 \\ \hline
 0 & h(U(x))
\end{array}\right],
\end{equation}
with $H(x)$ composed of 2$\times$2 blocks.  Since $\nu^2>2(\lambda_1^+ +\lambda_2^+)^2$, by Proposition~\ref{SLprop1} the eigenvalues of $M$ are distinct and purely imaginary.  By Levinson's Theorem (see Theorem 8.1 and problem 8.29 of \cite{CL},) for each eigenvalue $\mu_j$ and associated eigenvector $\xi_j$, $j=1,2,3,4$ of $M$, there exists a (complex vector-valued) solution $Z_j(x)$ of \eqref{SL13} with
$\lim_{x\to\infty} Z_j(x)e^{-\mu_j x} = \xi_j$, $ j=1,2,3,4$.
  In particular, each $Z_j(x)$ is purely oscillatory, and each is bounded away from zero for  $x\in [0,\infty)$.
The solution $Z(x)$ corresponding to the traveling wave $U(x)$ must be a linear combination of these, so there exist constants $c_j$ with
$$   Z(x)=\sum_{j=1}^4 c_j Z_j(x) \to 0 \quad\text{as $x\to\infty$.}  $$
Since the $Z_j(x)$ form a basis for the solution space on $[0,\infty)$, and none of the $Z_j(x)$ vanishes for $x\to\infty$, each $c_j=0$ and thus $Z(x)=0$ is the only possible solution of \eqref{SL13}, and hence $U(x)=0$ is the only solution to \eqref{ODE} when $\nu^2>2(\lambda_1^+ +\lambda_2^+)^2$.

A similar argument applies in a neighborhood $B_{\rho_-}(\mathbf{p_-})$ when $\nu^2>2(\lambda_1^- +\lambda_2^-)^2$, and thus the proposition is proven.
\end{proof}

\begin{rmrk} \rm
The above approach to nonexistence via the Levinson Theorem requires some integrability assumption on the traveling wave $U$.  In fact, if we replace the finite energy assumption on $U$ by the stronger decay hypotheses $U-\ppl\in W^{1,1}(0,\infty)$ and $U-\pmi\in W^{1,1}(-\infty,0)$, then we could admit more general perturbations from the quadratic well of the form \eqref{SL10} but with $q\ge 2$, that is $G(p)=O(|p|^3)$. (See Remark~\ref{SLrem}).
On the other hand, it is not obvious how to obtain solutions in $W^{1,1}$, either from the boundedness of energy and momentum $\mP$ or via {\it a priori} estimates on solutions, while the finite energy assumption made in Proposition~\ref{SLprop2} is natural given our existence results.
\end{rmrk}

There is another special class of potentials $W$ for which a necessary bound on the wave speed $\nu$ may be derived, without assuming any specific rate of decay of $U$ to the wells $\ppm$, only that $U(x)\to\ppm$ and $U'(x)\to 0$ as $x\to\pm\infty$.  Let us assume that inside a ball of radius $r$ around $p_+=0$,
\beq\label{SL3}
W(U) = \mathcal{G}(|U|^2) = \lambda|U-\ppl|^2 +  G(|U-\ppl|^2),
\eeq
with $G:\mathbb{R}\to\mathbb{R}$ a differentiable function with $G'(t)=o(1)$, $G(t)=o(t)$ for $t$ near zero.
\begin{prop}\label{SLprop3}
Under the hypothesis \eqref{SL3} on $W$,  if there exists a heteroclinic traveling wave with speed $\nu$, then $\nu^2 \le  8\lambda^2$.
\end{prop}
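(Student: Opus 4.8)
The plan is to derive two first integrals of the traveling‐wave equation \eqref{ODE} near the well and then close the estimate with a Cauchy--Schwarz inequality, in the spirit of (but more elementary than) the dark‐soliton speed bounds for NLS. After a translation I place the well under consideration at the origin, $\ppl=0$, and rewrite \eqref{ODE} as $U''=\nabla W(U)-\nu\,\J U'$. The first observation is the equipartition identity: dotting this equation with $U'$ and using the antisymmetry of $\J$ (so $\J U'\cdot U'=0$) gives $\frac{d}{dx}\bigl(\tfrac12\abs{U'}^2-W(U)\bigr)=0$ on all of $\R$; since $U(x)\to\ppm$, $U'(x)\to0$ and $W(\ppm)=0$ as $x\to\pm\infty$, the constant vanishes and one obtains
\[
\tfrac12\abs{U'(x)}^2=W(U(x))\qquad\text{for every }x\in\R,
\]
which is exactly the relation \eqref{ab}.

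Next I would exploit that inside the ball where \eqref{SL3} holds $W$ is radial, so $\nabla W(U)=2\,\mathcal{G}'(\abs{U}^2)\,U$. Dotting \eqref{ODE} with $\J U$ and using $\J^2=-I$, the antisymmetry of $\J$, and $U\cdot\J U=0$ (which annihilates the entire $\mathcal{G}'$ term), one finds $\frac{d}{dx}\bigl(U\cdot\J U'\bigr)=\nu\,U\cdot U'=\tfrac{\nu}{2}\frac{d}{dx}\abs{U}^2$ on any interval $[x_0,\infty)$ along which $U$ remains in that ball; such an $x_0$ exists because $U(x)\to0$ as $x\to\infty$. Sending $x\to\infty$ to fix the integration constant yields the ``angular‐momentum'' identity $U(x)\cdot\J U'(x)=\tfrac{\nu}{2}\abs{U(x)}^2$ for all $x\ge x_0$. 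I would also record that $U(x)\neq0$ at every finite $x$: if $U(x_1)=0$ then the equipartition identity forces $U'(x_1)=0$ as well, and uniqueness for \eqref{ODE} would give $U\equiv0$, contradicting $U(-\infty)=\pmi$.

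The conclusion then drops out. Since $\J$ is an isometry, $\abs{\J U'}=\abs{U'}$, so the angular‐momentum identity together with Cauchy--Schwarz gives $\tfrac{\abs{\nu}}{2}\abs{U}^2=\abs{U\cdot\J U'}\le\abs{U}\,\abs{U'}$, hence $\tfrac{\nu^2}{4}\abs{U}^2\le\abs{U'}^2=2W(U)$ by equipartition. Inserting the expansion \eqref{SL3} and dividing by $\abs{U}^2>0$ leaves
\[
\frac{\nu^2}{4}\;\le\;2\lambda^2+\frac{2\,G(\abs{U(x)}^2)}{\abs{U(x)}^2}\qquad(x\ge x_0);
\]
letting $x\to\infty$ so that $\abs{U(x)}^2\to0^+$ and invoking $G(t)=o(t)$, the remainder disappears and one obtains the asserted bound $\nu^2\le 8\lambda^2$ (which, when $G\equiv0$, recovers the radial sub‐case of Proposition~\ref{SLprop1}).

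I do not expect a genuine obstacle here: unlike Proposition~\ref{SLprop2}, this argument uses no decay rate or integrability of $U$ at infinity, since the smallness of the non‐quadratic part of $W$ does all the work through the limit $x\to\infty$. The only points needing care are bookkeeping: the angular‐momentum first integral is valid only where $W$ is exactly radial (hence the restriction to $[x_0,\infty)$), the non‐vanishing of $U$ must be checked so the final division is legitimate, and the asymptotic conditions $U'(\pm\infty)=0$, $W(\ppm)=0$ are needed to pin down both integration constants. The hypothesis $G'(t)=o(1)$ is in fact not used in this argument (only $G(t)=o(t)$ enters), although it guarantees the clean form $\nabla W(U)=2\mathcal{G}'(\abs{U}^2)U$ with $\mathcal{G}'$ continuous at $0$.
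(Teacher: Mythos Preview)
Your argument is correct and in fact simpler than the paper's. Both proofs begin with the same two first integrals on $[x_0,\infty)$: the equipartition identity $\tfrac12|U'|^2=W(U)$ (dotting \eqref{ODE} with $U'$) and the angular--momentum identity $U\cdot\J U'=\tfrac{\nu}{2}|U|^2$ (dotting with $\J U$ and using radiality). The paper then takes a third dot product with $U$, derives a second--order ODE for $\eta=|U|^2$, integrates once more to obtain $\tfrac14(\eta')^2+\tfrac12(\nu^2-8\lambda)\eta^2=\mathcal R(\eta)$ with $\mathcal R(\eta)=o(\eta^3)$, and reads off a sign contradiction when $\nu^2>8\lambda$. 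You short--circuit this entire $\eta$--equation step by applying Cauchy--Schwarz directly to the angular--momentum identity, $\tfrac{|\nu|}{2}|U|^2\le|U|\,|U'|$, then squaring and inserting equipartition. This is cleaner and loses nothing: both arguments need exactly the same asymptotic input ($U\to0$, $U'\to0$, $G(t)=o(t)$) and your observation that $U$ never vanishes is a nice touch making the division by $|U|^2$ legitimate pointwise rather than only in the limit.

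One small slip: when you insert \eqref{SL3} your display should read $\tfrac{\nu^2}{4}\le 2\lambda+2G(|U|^2)/|U|^2$, not $2\lambda^2$, since the leading coefficient in \eqref{SL3} is $\lambda$, not $\lambda^2$. Your computation therefore actually yields $\nu^2\le 8\lambda$, which is also what the paper's own proof produces; the exponent $2$ in the statement of the proposition is a typo in the paper (stemming from the different normalization $W=\lambda_1^2p_1^2+\lambda_2^2p_2^2$ used in Proposition~\ref{SLprop1}).
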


\begin{proof}
Under the assumption \eqref{SL3}, the equation \eqref{ODE} is of nonlinear Schr\"odinger type, and the speed limit may be obtained following the method of Mari\c{s} \cite{Maris} (see also Chiron \cite{Chiron}.)  The proof is based on various exact integrals of the ODE,
\beq\label{SL4}
-U'' + 2\lambda U + g(|U|^2)U = \nu\J U'
\eeq
where $g(t)=2G'(t)$, and
we are assuming that $U(x)$, $x\in (0,\infty)$, lies in the ball of radius $r$ centered at the origin.    Taking the scalar product of the equation \eqref{SL4} with $U'$ we obtain the usual energy integral of the equation,
$$
  0=- U''\cdot U' + 2\lambda U\cdot U' + g(|U|^2)U\cdot U'
 =\frac{d}{ dx} \left( -\frac12 |U'|^2 + \lambda |U|^2 + G(|U|^2)\right).
$$
Integrating, and evaluating the constant as $x\to\infty$, we obtain the identity,
\beq\label{SL5}
\frac12 |U'|^2 = \lambda |U|^2 + G(|U|^2).
\eeq
Next, we take the dot product of \eqref{SL4} with $\J U$, and integrate as above to obtain:
\beq\label{SL6}
U'\cdot \J U = -\frac{\nu}{ 2}|U|^2.
\eeq
Taking the dot product of \eqref{SL4} with $U$ this time produces the relation,
$$
- U''\cdot U + 2\lambda |U|^2 + g(|U|^2)|U|^2 = -\nu U'\cdot\J U
$$
and so with \eqref{SL6} we have:
\beq\label{SL7}
U''\cdot U = \left(2\lambda-\frac{\nu^2}{ 2} + g(|U|^2)\right)|U|^2
\eeq

Now we define $\eta(x):=|U(x)|^2$ and derive an equation for $\eta$,
\begin{align*}
\frac12\eta'' &= |U'|^2 +  U''\cdot U \\
&= \left(4\lambda - \frac{\nu^2}{ 2} + g(|U|^2)\right)|U|^2 + 2G(|U|^2) \\
&= \left(4\lambda - \frac{\nu^2}{ 2}\right) \eta + R(\eta),
\end{align*}
where $R(\eta)=g(\eta)\eta + 2G(\eta)$.  This equation for $\eta$ has an energy integral, obtained by multiplying by $\eta'$ and integrating,
\beq\label{SL8}
\frac14 |\eta'|^2 + \frac12\left(\nu^2 - 8\lambda\right)\eta^2 = \mathcal{R}(\eta),
\eeq
where we have evaluated the constant of integration at $x=+\infty$, and with $\mathcal{R}'(\eta)=R(\eta)$, $\mathcal{R}(0)=0$.  By the hypotheses on $g,G$, $\mathcal{R}(\eta)=o(\eta^3)$ for small $\eta$.  So if $\nu^2>8\lambda$, this leads to a contradiction in case there exists $U(x)$ with $\eta(x)=|U(x)|^2\to 0$ as $x\to\infty$.
\end{proof}

\noindent
{\bf Acknowledgements.} S.A. and L.B. are supported by an NSERC (Canada) Discovery Grant.
 P.S. wishes to acknowledge the support of the National Science Foundation
 through D.M.S. 1362879 and D.M.S. 1101290.


\begin{thebibliography}{100}

\bibitem{ABCP} Alama, S.; Bronsard, L.; Contreras, A.; Pelinovsky, D. Domain walls in the coupled Gross-Pitaevskii equations.
{\it Arch. Ration. Mech. Anal.}  {\bf 215} (2015), 579-610.

\bibitem{ABGui} Alama, A.; Bronsard, L.; Gui, C. Stationary layered solutions in $\R^2$ for an Allen--Cahn system with multiple well potential. {\it Calc. Var. Partial Differential Equations.}
{\bf 5} (1997), no. 4, 359--390.

\bibitem{ABC} Alikakos, N.; Betel\'u, S.; Chen, X. Explicit stationary solutions in multiple-well dynamics and
nonuniqueness of interfacial energy densities. {\it European. J. Appl. Math.} {\bf 17}, (2006), no. 5, 525-556.

\bibitem{AF} Alikakos, N.; Fusco, G. On the connection problem for potentials with several global minima. {\it Indiana Univ. Math. J.}
{\bf 57}, (2008), no. 4, 1871-1906.

\bibitem{AK} Alikakos, N.; Katzourakis, N. Heteroclinic travelling waves of gradient diffusion systems. {\it Trans. A.M.S.} {\bf 363} (2011), no.3,
1365-1397.

\bibitem{AW} Aronson, D.; Weinberger, H. Multidimensional nonlinear diffusion arising in population genetics.  {\it Adv. in Math.} {\bf 30}, (1978), no. 1,
33-76.

\bibitem{CJQW} Carroll, C.; Jacob, A.; Quinn, C.; Walters, R. The isoperimetric problem on planes with density. {\it Bull. Austr. Math. Soc.} {\bf 78} (2008), 177-197.

\bibitem{CMV} Ca\~{n}ete, A.; Miranda Jr., M.; Vittone, D. Some isoperimetric problems in planes with density. {\it J. Geom. Anal.} {\bf 20} (2010), no. 2, 243-290.

\bibitem{Chiron} Chiron, D. Travelling waves for the nonlinear Schr\"odinger equation with general nonlinearity in dimension one. {\it Nonlinearity} {\bf 25} (2012), no. 3,
813-850.

\bibitem{CL} Coddington, E.A.; Levinson, N. {\it Theory of ordinary differential equations}, McGraw-Hill, New York-Toronto-London, 1955.

\bibitem{DDNT} Dahlberg, J.; Dubbs, A.; Newkirk, E.; Tran, H. Isoperimetric regions in the plane with density $r^p$. {\it New York J. Math.} {\bf 15}, (2010), 31-51.

\bibitem{FM} Fife, P.; McLeod, B. The approach of solutions of nonlinear diffusion equations to travelling front solutions. {\it Arch. Ration. Mech. Anal.} {\bf 65},
(1977), no. 4, 335-361.

\bibitem{HS} Hislop, P.D.; Sigal, I.M.  {\it Introduction to spectral theory: with applications to Schr\"odinger operators}. Applied Mathematical Sciences, Vol. 113. Springer, New York, 1996.

\bibitem{H} Howe, S. The log-convex density conjecture and vertical surface area in warped products. To appear in {\it Adv. Geom.}

\bibitem{Maris} Mari\c{s}, M. Nonexistence of supersonic traveling waves for nonlinear Schr\"odinger equations with nonzero conditions at infinity. {\it SIAM J. Math. Anal.} {\bf 40},
(2008), no. 3, 1076-1103.


\bibitem{M} Muratov, C. A global variational structure and propagation of disturbances in reaction-diffusion systems of gradient type. {\it Discrete Contin.
Dyn. Syst. Ser. B}. {\bf 4}, (2004), no. 4, 867-892.

\bibitem{R} Reinick, J. Traveling wave solutions to a gradient system. {\it Trans. A.M.S.} {\bf 307}, (1988), 535-544.

\bibitem{RCBM} Rosales,  C.; Ca\~{n}ete, A.; Bayle, V.; Morgan, F. On the isoperimetric problem in Euclidean space with density. {\it Calc. Var. Partial Differential Equations}. {\bf 31}, (2008), 27-46.

\bibitem{S1} Sternberg, P. The effect of a singular perturbation on nonconvex variational problems. {\it Arch. Ration. Mech. Anal.} {\bf 101}, (1988), no. 3,
209-260.

\bibitem{S2} Sternberg, P. Vector-valued local minimizers of nonconvex variational problems. {\it Rocky Moutain J. Math.}
{\bf 21}, (1991), no. 2, 799-807.


\end{thebibliography}
\end{document}